 \pgfplotsset{compat = newest} %tikz stuff
\newcommand{\N}{\mathbb{N}} %natural numbers
\newcommand{\R}{\mathbb{R}} %real numbers
\newcommand{\whp}{whp}%with high probability
\newcommand{\prob}[1]{\mathbb{P}\left[#1\right]} %Probability
\newcommand{\expec}[1]{\mathbb{E}\left[#1\right]} %Expectation
\newcommand{\vNhd}[2][r]{
\ifstrequal{#1}{1}
{N_V \left(#2 \right)}
{N_V^{#1}\left(#2 \right)}
}   %rth vertex neighbourhood
\newcommand{\eNhd}[2][r]{
\ifstrequal{#1}{1}
{N_E \left(#2 \right)}
{N_E^{#1}\left(#2 \right)}
}   %rth edge neighbourhood
\newcommand{\dist}[2]{d_G (#1,#2 ) } %distance in a hypergraph
\newcommand{\vine}[1]{V_{#1}} %vertices in edges
\newcommand{\cop}[1]{c\left(#1\right)}
\newcommand{\smoothconst}[0]{\xi} %constant for smooth expansion property 
\newcommand{\smoothexp}[0]{$\smoothconst$-expanding } 
\newcommand{\dhat}{\hat{d}} %macro for \hat(d) denoting the approximation of the average degree
\newcommand{\regimefactor}[0]{\lambda} %function for additional log factor on border of regimes
\newcommand{\tb}[1][b]{t_{#1}} %constant for bk/2
\newcommand{\edgecompleteset}[0]{M}
\newcommand{\Gnp}{\ensuremath{G(n,p)}} %G(n,p)
\newcommand{\Gknp}{\ensuremath{G^k(n,p)}} %hypergraph G^k(n,p)
\newcommand{\Bin}[0]{\textrm{Bin}} %Binomial distribution
\newcommand{\minp}[2]{\min_{#1} \left\{ #2\right\}} %using \left for min function
\newcommand{\maxp}[2]{\max_{#1} \left\{ #2\right\}} %using \left for max function
\newcommand{\estimerror}[0]{\delta}
\newtheorem{thm}{Theorem}[section]
\newtheorem{coro}[thm]{Corollary}
\newtheorem{lem}[thm]{Lemma}
\newtheorem{conjecture}[thm]{Conjecture}
\theoremstyle{remark}
\theoremstyle{definition}
\newtheorem{definition}[thm]{Definition}
\newtheorem{question}[thm]{Question}
\crefname{thm}{theorem}{theorems}
\crefname{prop}{proposition}{propositions}
\crefname{coro}{corollary}{corollaries}
\crefname{lem}{lemma}{lemmas}
\crefname{definition}{definition}{definitions}
\crefname{conjecture}{conjecture}{conjectures}
\crefname{question}{Question}{questions}
\newtheoremstyle{claim}%name
{}%Space above
{}%Space below
{\itshape}%Body font
{}%Indent amount
{\bf}%Theorem head font
{.}%Punctuation after theorem head
{.5em}%Space after theorem head
{}%Theorem head spec(can be left empty, meaning ‘normal’)
\theoremstyle{claim}
\newtheorem{claim}{Claim}
\crefname{claim}{claim}{claims}
\title{Catching a robber on a random $k$-uniform hypergraph}
\author{Joshua Erde$^1$, Mihyun Kang$^1$}
\address{$^1$Institute of Discrete Mathematics, Graz University of Technology, Steyrergasse 30, 8010 Graz, Austria}
\email{\{erde,kang,schmid\}@math.tugraz.at}
\author{Florian Lehner$^2$}
\address{$^2$Department of Mathematics, University of Auckland, 38 Princes Street, 1010,  Auckland, New Zealand}
\email{florian.lehner@auckland.ac.nz}
\author{Bojan Mohar$^3$}
\address{$^3$Department of Mathematics, Simon Fraser University, 8888 University Drive, Burnaby, BC, Canada}
\email{mohar@sfu.ca}
\author{Dominik Schmid$^1$}
\keywords{Cops and Robber game, cop number, random hypergraph, expansion properties}
\numberwithin{equation}{section}
\begin{document}
\maketitle

\begin{abstract}
The game of \emph{Cops and Robber} is usually played on a graph, where a group of cops attempt to catch a robber moving along the edges of the graph. The \emph{cop number} of a graph is the minimum number of cops required to win the game. 
An important conjecture in this area, due to Meyniel, states that the cop number of an $n$-vertex connected graph is $O(\sqrt{n})$. In 2016, Pra\l at and Wormald [Meyniel's conjecture holds for random graphs, Random Structures Algorithms. 48 (2016), no. 2, 396–421. MR3449604] showed that this conjecture holds with high probability for random graphs above the connectedness threshold. Moreover, \L uczak and Pra\l at [Chasing robbers on random graphs: Zigzag theorem, Random Structures Algorithms. 37 (2010), no. 4, 516–524. MR2760362] showed that on a $\log$-scale the cop number demonstrates a surprising \emph{zigzag} behaviour in dense regimes of the binomial random graph $G(n,p)$. In this paper, we consider the game of Cops and Robber on a hypergraph, where the players move along hyperedges instead of edges. We show that with high probability the cop number of the $k$-uniform binomial random hypergraph $G^k(n,p)$ is $O\left(\sqrt{\frac{n}{k}}\, \log n \right)$ for a broad range of parameters $p$ and $k$ and that on a $\log$-scale our upper bound on the cop number arises as the minimum of \emph{two} complementary zigzag curves, as opposed to the case of $G(n,p)$. Furthermore, we conjecture that the cop number of a connected $k$-uniform hypergraph on $n$ vertices is $O\left(\sqrt{\frac{n}{k}}\,\right)$. 
\end{abstract}

\section{Introduction and results}\label{sec:Intro&Main}
\subsection{Motivation}\label{subsec:Motivation}

    The game of \emph{Cops and Robber} was introduced by Quilliot \cite{quilliot1978jeux} and independently by Nowakowski and Winkler \cite{NoWi1983}. It is a two-player game played on a simple connected graph $G= (V,E)$, with one player controlling a set of $m$ cops and the other player controlling a single robber. For convenience, we will sometimes refer to the cops and the robber as \emph{pieces}. At the start of the game, the first player chooses a starting vertex for each of the cops, then the second player chooses a starting vertex for the robber. Subsequently, the players take alternating turns and in each turn a player can move each of their pieces to an adjacent vertex (i.e., the pieces move along the edges of $G$). Note that more than one cop can simultaneously occupy a single vertex and that not every piece must be moved in every turn. The position of all the pieces is known to both players throughout the game. The cops win if at some point in the game a cop occupies the same vertex as the robber, otherwise the robber wins. As this is a game with full information, for each graph $G$ and each number of initial cops $m$, one of the two players has a winning strategy. The \emph{cop number $\cop{G}$} of a graph $G$ is defined as the minimum number $m \in \N$, such that $m$ cops have a winning strategy on $G$. The cop number has been extensively studied since the introduction of this game. 
    
    Whilst there is a structural characterisation of the graphs with cop number one \cite{NoWi1983}, in general the problem of determining the cop number of a graph is EXPTIME-complete \cite{Ki2015},
    and so research in this area has been focused on bounding the cop number of particular graph classes. For example, a classic result of Aigner and Fromme \cite{AiFr1984} shows that the cop number of a connected planar graph is at most three. More generally, it is known that the cop number is bounded for any proper minor-closed class of graphs \cite{A86}, and there has been much research into determining the largest cop number of a graph that can be embedded in a fixed surface \cite{BoErLePi2021,CFJT14,GHKMR21,IMW23,QUILLIOT1985,Schroeder2001}. 
    
    Perhaps the most well-known conjecture in this area is Meyniel's conjecture (communicated by Frankl \cite{FRANKL1987}).
    
    \begin{conjecture}\label{conj:Meyniel}
        Let\/ $G$ be a connected graph on $n$ vertices. Then $\cop{G} = O\left(\sqrt{n}\right)$.
    \end{conjecture}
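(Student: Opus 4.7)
The plan is shaped by the fact that \Cref{conj:Meyniel} is a famously open problem, so what follows is necessarily a proof \emph{strategy} aligned with the best known partial bounds of the form $\cop{G} = n \cdot 2^{-(1+o(1))\sqrt{\log n}}$ (Lu--Peng, Scott--Sudakov, Frieze--Krivelevich--Loh) rather than a complete route. The broad idea I would pursue is an \emph{expansion dichotomy}. Fix a radius parameter $r$ of order roughly $\sqrt{\log n}$ and, for each vertex $v$, examine the ball $B_r(v) = \{u : \dist{v}{u} \le r\}$. Call $v$ \emph{heavy} if $|B_r(v)| \ge \sqrt{n}$ and \emph{light} otherwise, and split the argument according to which type dominates.

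In the heavy case, I would place $\Theta(\sqrt{n})$ cops uniformly at random in $V(G)$. A first-moment calculation then shows that, whp, every heavy vertex has a cop inside its radius-$r$ ball. I would then invoke the Aigner--Fromme shortest-path guarding technique: once a cop is within distance $r$ of the robber, it can step onto a shortest $v$-to-$u$ path in $r$ moves and thereafter patrol it, so that the robber can never cross. After deploying $O(\sqrt{n})$ such guarded paths in succession, the robber's reachable set shrinks enough that the remaining cops finish off the pursuit directly.

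In the light case, the $r$-neighbourhoods are small, so a greedy covering together with induction on a ``robber territory'' parameter would be used: pick a light vertex threatened by the robber, spend a handful of cops to cover $B_r(v)$, and recurse on the smaller subgraph to which the robber's reachable set is now restricted. The recursion is tuned so that the total cop budget across all levels remains $O(\sqrt{n})$, exploiting the fact that each light vertex prunes a genuinely small piece of the graph.

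The main obstacle, and precisely where every known attack stalls, is the intermediate regime in which $|B_r(v)|$ grows polynomially in $r$ but only very slowly. The probabilistic placement needs exponential-in-$r$ expansion to beat the union bound over $V(G)$, while the recursive light-case argument needs balls that are genuinely sparse to force meaningful progress; neither delivers the sharp $\sqrt{n}$ target, and one inevitably bleeds either a $\log n$ or a $2^{O(\sqrt{\log n})}$ factor. A full resolution would presumably require a genuinely new structural ingredient --- perhaps a cop-game-tailored isoperimetric inequality, or a non-uniform placement that simultaneously exploits local expansion and global connectivity --- which has so far eluded the community and lies well outside the scope of the present paper.
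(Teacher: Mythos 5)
The statement you were asked about is \Cref{conj:Meyniel}, Meyniel's conjecture, which the paper states precisely as a \emph{conjecture}: it offers no proof, and none is known. You correctly recognise this, and your submission is explicitly a strategy sketch rather than a proof, so the honest verdict is that there is a genuine gap --- namely, the entire argument. The dichotomy you describe (heavy versus light vertices according to whether $|B_r(v)| \ge \sqrt{n}$, random cop placement in the expanding case, recursive pruning in the sparse case) is a reasonable caricature of how the known partial results are obtained, and you correctly locate the obstruction: the intermediate regime of slow polynomial growth, where neither the union bound over random placements nor the recursive territory-shrinking argument closes. This is consistent with the state of the art cited in the paper, where the best general bound remains $n\,2^{-(1+o(1))\sqrt{\log n}}$ and it is open even whether $\cop{G} = O\left(n^{1-\epsilon}\right)$ for some fixed $\epsilon > 0$.

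Two smaller points on the sketch itself. First, the Aigner--Fromme shortest-path guarding technique lets one cop permanently block a geodesic, but iterating it naively costs one cop per path and does not obviously terminate after only $O\left(\sqrt{n}\right)$ paths in a general graph; the known subpolynomial-savings bounds instead use a random placement combined with an iterated ``catch the robber if he stays still for $r$ rounds'' argument, not path guarding. Second, in the light case your recursion needs the robber's reachable set to shrink by a constant factor at bounded cop cost, and in graphs with many small, overlapping balls this is exactly where the accounting fails to stay within $\sqrt{n}$. Neither issue is fixable by the methods of this paper: \Cref{thm:main:Meyniel} here applies only to \smoothexp hypergraphs, i.e., graphs with uniform exponential neighbourhood growth, which is precisely the hypothesis a worst-case graph need not satisfy. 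So your assessment that a genuinely new structural ingredient is required is the correct conclusion.
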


    Despite much interest in this conjecture, there has been relatively little improvement to the trivial bound of $O(n)$. Frankl \cite{FRANKL1987} gave the first non-trivial upper bound on the cop number of $O\left(\frac{n \log \log n}{\log n}\right)$, and this bound was improved to $O\left(\frac{n}{\log n}\right)$ by Chiniforooshan \cite{Ch2008}. As of today, the best known general upper bound on the cop number is $n 2^{-(1+o(1)) \sqrt{\log n}}$, given independently by Lu and Peng \cite{LuPe2012} and by Scott and Sudakov \cite{ScSu2011}, which was later generalised to different variations of the Cops and Robber game by Frieze, Krivelevich and Loh \cite{FrKrLo2012}. We note that this bound is still $\Omega \left(n^{1-o(1)} \right)$, and it remains an open question as to whether the cop number can be bounded by $O\left(n^{1-\epsilon}\right)$ for any fixed $\epsilon>0$ \cite{BB12}.
    
     A natural step towards understanding Conjecture \ref{conj:Meyniel} is to consider the cop number of the \emph{binomial random} graph $\Gnp$. For $p$ constant, it was shown by Bonato, Hahn and Wang \cite{BHW07} that whp\footnote{Throughout the paper, all asymptotics are considered as $n \to \infty$ and so, in particular, whp (with high
probability) means with probability tending to one as $n \to \infty$.} the cop number of $\Gnp$ is logarithmic in $n$, and hence Conjecture \ref{conj:Meyniel} holds for almost all graphs.
        However, if we let $p$ vary as a function of $n$, then more interesting behaviour can be seen to develop. Indeed, \L uczak and Pra\l at \cite{Luczak_Pralat_RobberZigZag} showed that the cop number of $\Gnp$ behaves in a rather interesting manner in \emph{dense} regimes. Their result can be roughly summarised as follows, where we use $\Tilde{\Theta}(\cdot)$ to indicate  a bound which holds up to logarithmic factors.

    \begin{thm}[{\cite[Theorem 1.1]{Luczak_Pralat_RobberZigZag}}]\label{thm:LuPr:zigzag}
        Let\/ $0 < \alpha <1$ and $d = np = n^{\alpha +o(1)}$. 
        \begin{enumerate}
            \item If $\frac{1}{2j+1} < \alpha < \frac{1}{2j}$, for some $j \in \N$, then \whp
            \begin{align*}
                \cop{\Gnp} = \Theta\left(d^j\right).
            \end{align*}
            \item If $\frac{1}{2j} < \alpha < \frac{1}{2j-1}$, for some $j \in \N$, then \whp
            \begin{align*}
                \cop{\Gnp} = \Tilde{\Theta}\left(\frac{n}{d^j} \right).
            \end{align*}
        \end{enumerate}
    \end{thm}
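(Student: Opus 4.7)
My plan is to first establish pseudo-random expansion properties of $\Gnp$ and then derive matching upper and lower bounds via complementary cop/robber strategies. The crucial input is that \whp\ in $\Gnp$ with $d=np=n^{\alpha+o(1)}$, every vertex $v$ satisfies $|N^r(v)|=(1\pm o(1))d^r$ for all $r$ with $d^r \le n/\text{polylog}(n)$. In particular, $\Gnp$ locally resembles a $d$-regular tree out to distance approximately the diameter $\lceil \log n / \log d \rceil$, and between any two vertices at that distance there exist many near-disjoint short paths.

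For the upper bounds I would deploy two complementary cop strategies indexed by $j$. The first is a \emph{ball-covering} strategy: place $O\bigl((n/d^j)\log n\bigr)$ cops randomly so that \whp\ every $j$-ball of $\Gnp$ contains a cop; this yields the bound $\tilde{O}(n/d^j)$, valid for $\alpha\in(1/(2j),1/(2j-1))$. The second is a \emph{shadowing} strategy using $\Theta(d^j)$ cops who cooperatively patrol the $(j+1)$-neighborhood of the robber: since $d^j\cdot d^{j+1} \ge n$ precisely when $\alpha \ge 1/(2j+1)$, the cops' $(j+1)$-balls can be arranged to cover the entire graph, so they can jointly force the robber into a restricted region. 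This handles $\alpha\in(1/(2j+1),1/(2j))$. In either case, once the robber has been trapped in a small region, a constant number of extra cops close in and capture them.

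For the lower bounds I would design a robber strategy via local avoidance. At each round, the robber considers the set of cops within some threshold distance $r$ (only these pose an immediate threat) and moves to a vertex in $N(v)$ that is not reachable by any of these cops within $r$ turns. A counting argument using the neighborhood estimate $|N^r(u)|=(1\pm o(1))d^r$ shows that with fewer than $d^j$ cops (respectively $n/d^j$ cops) the ``threatened'' portion of $N(v)$ is strictly smaller than $|N(v)|\approx d$, so a safe move always exists. Iterating this gives a winning strategy for the robber.

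The hardest step is the lower bound: showing the robber has a safe move for one round is relatively easy, but one must argue persistence over all subsequent rounds. The obstruction is that the cops can coordinate to herd the robber into regions of shrinking safety. I would handle this by introducing a potential function --- for example the number of currently safe moves --- and use the expansion estimates to lower-bound its decrease per round, combined with an occasional ``restart'' step in which the robber sprints to a far and safer part of the graph. The zigzag pattern then emerges naturally: since $d^j=n/d^j$ exactly at $\alpha=1/(2j)$, switching between the two strategies at these critical values produces the minimum of two exponential curves, which on a $\log$-scale is the characteristic zigzag.
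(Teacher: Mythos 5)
First, note that this statement is quoted from \L uczak and Pra\l at and is not proved in the paper you were given; the closest thing to ``the paper's own proof'' is the hypergraph analogue of the upper bound in Section 3 (Claims \ref{claim:injection_vertexstrat} and \ref{claim:injection_edgestrat}), against which your sketch can be measured.

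Your upper-bound mechanism has a genuine gap. Both of your strategies (``every $j$-ball contains a cop'' and ``the cops' $(j+1)$-balls cover the graph'') only guarantee that some cop is always \emph{near} the robber, which is a distance-$j$ domination statement, not a capture statement: a single cop within distance $j$ of a moving robber cannot in general catch him, and ``force the robber into a restricted region, then close in with $O(1)$ extra cops'' is not something expansion gives you --- in an expander there is no small region to be trapped in. The actual argument (and the one this paper uses) is a \emph{surrounding} strategy: fix the robber's starting vertex $v$, and find an \emph{injection} from the set of vertices the robber could occupy after $j-1$ moves (i.e.\ $N_V^{j-1}(v)$, of size $\approx d^{j-1}$) into the set of cops, assigning to each such vertex $x$ a distinct cop within distance $j$ of $x$ who commits to standing on $x$ at time $j$; since the cops move first, whichever vertex the robber occupies at time $j-1$ is then hit. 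The existence of this injection is verified via Hall's condition for a random cop placement, using the two-sided neighbourhood estimates, and the thresholds $d^{2j+1}\ge n$ and $d^{2j}\le n\log n$ are exactly what make Hall's condition checkable in the two regimes. Your density calculations ($d^j$ cops when $d^{2j+1}\ge n$, $\tilde{O}(n/d^j)$ cops otherwise) are the right arithmetic, but without the system-of-distinct-representatives step the strategy does not terminate in a capture. Your lower-bound sketch is likewise only heuristic: the relevant count is not the threatened fraction of the first neighbourhood $N(v)$ but of the distance-$j$ sphere (the robber escapes by sprinting along a length-$j$ path to a vertex no cop can reach in $j$ steps), and you must additionally rule out interception \emph{along} that path, which requires the near-disjointness of short paths that you state but do not use; the ``potential function plus restart'' idea is the right flavour but is not a proof as written.
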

    In particular, Theorem \ref{thm:LuPr:zigzag} implies that the function $f\colon (0,1) \to \mathbb{R}$, defined as

    \begin{align}
        f(x) = \frac{\log \left( \bar{c}\left(G \left(n,n^{x-1}\right) \right)\right)}{\log n}, \label{e:graphf}
    \end{align} where $\bar{c}$ denotes the median of the cop number, has a characteristic zigzag shape (see Figure \ref{fig:zigzag}).

    In particular, Theorem \ref{thm:LuPr:zigzag} implies that whp $\cop{\Gnp} = \tilde{O}\left(\sqrt{n}\right)$ throughout this range of $p$, and that conversely there are choices of $p$ where whp $\cop{\Gnp} = \tilde{\Theta}\left(\sqrt{n}\right)$ and Conjecture \ref{conj:Meyniel} is close to tight for almost all graphs of this density. Note that, although \Cref{thm:LuPr:zigzag} does not explicitly deal with the case $d = n^{\frac{1}{k} + o(1)}$ with $k \in \mathbb{N}$, the proofs for the upper bounds
    in \cite{Luczak_Pralat_RobberZigZag} also cover this case, and the authors indicate how to extend the lower bound to this range of $d$. Bollob\'{a}s, Kun and Leader  \cite{BoKuLe2013} gave a similar bound which holds also for sparser regimes of $p$.
    
    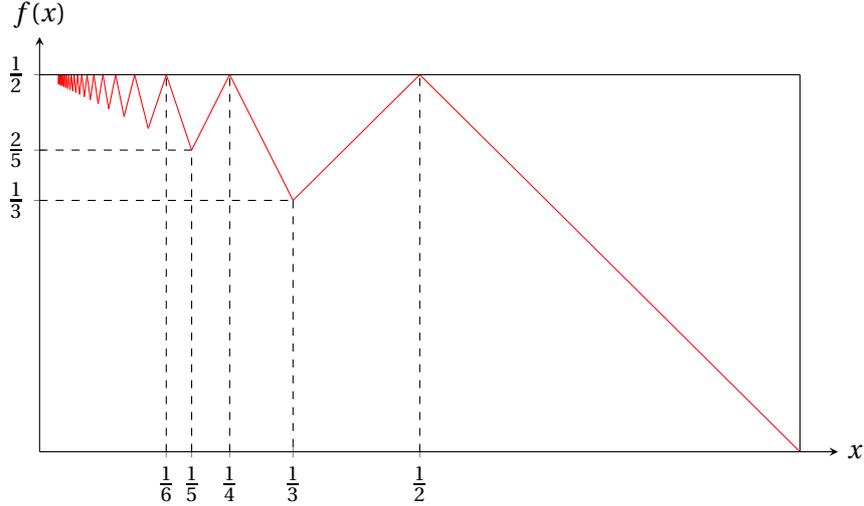
\begin{figure}[!ht]
        \centering
        \begin{tikzpicture}
    \begin{axis}[xmin=0, xmax=1.05, ymin=0, ymax = 0.55, y=10cm,
    x=10cm,
    axis lines = left,
    xtick = {0.1666,0.2,0.25,0.3333,0.5},
    xticklabels= {$\frac{1}{6}$,$\frac{1}{5}$,$\frac{1}{4}$,$\frac{1}{3}$,$\frac{1}{2}$},
    ytick = {0.3333,0.4,0.5},
    yticklabels= {$\frac{1}{3}$,$\frac{2}{5}$,$\frac{1}{2}$},
    ]
    \def\i{20} %Set the number of zigzags to draw
    \foreach \a in {1,...,\i}
       \addplot[red,domain=1/(2*\a):1/(2*\a-1)] {1-\a*x};
        \foreach \b in {1,...,\i}
       \addplot[red,domain=1/(2*\b +1):1/(2*\b)] {\b*x};

    \draw[dashed,opacity = 0.5] ({axis cs:0.5,0}|-{ axis cs:0,0}) -- ({axis cs:0.5,0}|-{axis cs:0,0.5});
    \draw[dashed,opacity = 0.5] ({axis cs:0.3333,0}|-{axis cs:0,0.3333})--({axis cs:0.3333,0}|-{ axis cs:0,0});
    \draw[dashed,opacity = 0.5] ({axis cs:0.25,0}|-{ axis cs:0,0}) -- ({axis cs:0.25,0}|-{axis cs:0,0.5});
    \draw[dashed,opacity = 0.5] ({axis cs:0.2,0}|-{ axis cs:0,0}) -- ({axis cs:0.2,0}|-{axis cs:0,0.4});
    \draw[dashed,opacity = 0.5] ({axis cs:0.1666,0}|-{ axis cs:0,0}) -- ({axis cs:0.1666,0}|-{axis cs:0,0.5});

     \draw[dashed,opacity = 0.5] ({axis cs:0,0}|-{ axis cs:0,0.3333}) -- ({axis cs:0.3333,0}|-{axis cs:0,0.3333});
     \draw[dashed,opacity = 0.5] ({axis cs:0,0}|-{ axis cs:0,0.4}) -- ({axis cs:0.2,0}|-{axis cs:0,0.4});

     \draw[-] ({axis cs:0,0}|-{ axis cs:0,0.5}) -- ({axis cs:1,0}|-{axis cs:0,0.5});
     \draw[-] ({axis cs:1,0}|-{ axis cs:1,0.5}) -- ({axis cs:1,0}|-{axis cs:1,0});

     \coordinate (A) at (axis cs:0,0.55);
     \coordinate (B) at (axis cs:1.05,0);
    \end{axis}
    \node[above] at (A) {$f(x)$};
    \node[right] at (B) {$x$};
    \end{tikzpicture}
        \caption{Zigzag shape of the function $f$}
        \label{fig:zigzag}
    \end{figure}  
    
    Meyniel's conjecture was finally resolved for all random graphs above the connectedness threshold by Pra\l at and Wormald \cite{Pralat_Wormald_CopsMeyniels}. In fact, their result holds for all random graphs with density above $\frac{1}{2} \log n$.
     
     \begin{thm}[\cite{Pralat_Wormald_CopsMeyniels}, Theorem 1.2]
         Let\/ $\epsilon >0$, and $p(n-1) \geq \left(\frac{1}{2} +\epsilon \right)\log n $. Then \whp
         \begin{align*}
             \cop{\Gnp} = O\left(\sqrt{n}\right). 
         \end{align*}
     \end{thm}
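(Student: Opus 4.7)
The plan is to follow the two-stage approach pioneered by Pra\l at and Wormald: first establish strong expansion and concentration properties for $\Gnp$, then leverage them to design a cop strategy using only $O(\sqrt{n})$ cops. The density condition $p(n-1) \geq (\tfrac12 + \epsilon)\log n$ is crucial in both stages, both to guarantee connectivity and to provide enough slack to union-bound over vertices and over cop decisions.

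For the first stage I would apply Chernoff bounds together with a breadth-first search argument to show that \whp every vertex of $\Gnp$ has degree $\I{np}{o(1)}$, the diameter is $O(\log n/\log(np))$, and for every vertex $v$ and every integer $r$ with $(np)^r = o(n)$ one has $|\vNhd[r]{v}| = \I{(np)^r}{o(1)}$; moreover, once $(np)^r$ is a sufficiently large multiple of $n$, the $r$-ball saturates and contains all but $o(n)$ vertices. The $\tfrac12 + \epsilon$ slack ensures that the failure probability per vertex is $o(1/n)$, so these estimates hold simultaneously for all vertices and all relevant radii via a union bound, and in particular $\Gnp$ is \whp\ connected.

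For the second stage I would fix the critical radius $r$ so that $(np)^r \approx \sqrt{n}$ and greedily select a set $S \subseteq V(\Gnp)$ of size $O(\sqrt{n})$ that $r$-covers the graph, i.e., every vertex lies within distance $r$ of some $s \in S$. Such a set exists by the neighbourhood estimate from Stage 1. Cops are placed at $S$; as soon as the robber is revealed, some cop $c \in S$ lies within distance $r$ of the robber and begins a shortest-path chase, while the remaining cops reposition as sentinels. At each round the sentinels are placed so as to block the frontier of the set of vertices the robber could currently occupy; the expansion estimate controls how many sentinels suffice to guard this frontier, and the analysis is arranged so that the total cop budget stays $O(\sqrt{n})$ throughout the $O(\log n/\log(np))$ rounds needed for the chaser to close in.

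The main obstacle is executing the chase in the near-threshold regime $np = \Theta(\log n)$, where each vertex has only logarithmically many neighbours and no single cop can guard the robber's immediate neighbourhood. Here one cannot hope to trap in a single step; instead the strategy must proceed by a careful inductive refinement that uses the expansion of larger balls rather than single neighbourhoods, and the cost per refinement round must be tightly controlled so that the $O(\log n/\log(np))$ rounds fit within the $O(\sqrt{n})$ cop budget. Making this delicate counting work, and showing that the union bound over all possible robber strategies still survives the iteration, is the hard part; this is precisely where the $\tfrac12 + \epsilon$ slack is required, since it is what makes the Stage~1 failure probabilities decay rapidly enough that every step of the cops' reasoning remains \whp\ correct.
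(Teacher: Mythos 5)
This theorem is quoted from Pra\l at and Wormald and is not proved in the paper, so there is no in-paper proof to compare against; I can only measure your sketch against the known argument (whose dense-regime mechanism is mirrored in Section 3 of this paper). Judged that way, your proposal has a genuine gap at its core. The step ``the remaining cops reposition as sentinels \ldots placed so as to block the frontier of the set of vertices the robber could currently occupy'' is not something cops can do: they move at the same speed as the robber, one edge per turn, so they cannot be redeployed onto an arbitrary frontier between rounds. Moreover, after $t$ turns the robber's reachable set has size roughly $(np)^t$, which exceeds $\sqrt{n}$ as soon as $(np)^t > \sqrt{n}$, so a budget of $O(\sqrt{n})$ cops cannot occupy that frontier vertex-by-vertex at all. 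The correct mechanism --- used both by Pra\l at--Wormald and in Claims \ref{claim:injection_vertexstrat} and \ref{claim:injection_edgestrat} of this paper --- is to \emph{pre-position} the cops (e.g.\ independently at random with a suitable density $q$) and then prove, via Hall's theorem and Chernoff bounds on $|N_V^r(A)|$, that for every possible robber start there is an injection assigning to each vertex of the robber's $(r-1)$-ball a distinct cop within distance $r$, so that the whole ball is occupied before the robber can leave it. Your sketch never supplies this matching argument, and your final paragraph explicitly defers ``the delicate counting'' that would replace it, so what you have is a plan rather than a proof.

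A secondary but real issue is that even with the surrounding/matching mechanism in hand, the hardest part of Pra\l at--Wormald is exactly the regime you flag, $np = \Theta(\log n)$, together with the boundary exponents $np = n^{1/(2j)+o(1)}$ where the one-shot surrounding strategy only yields $O(\sqrt{n}\,\log n)$ (this is why Theorems \ref{thm:main:Meyniel} and \ref{thm:main:regimes} here carry a $\log n$ factor). Removing that logarithm requires their two-stage refinement --- a first wave of cops that confines the robber to a region where a second, freshly analysed wave can finish --- and your proposal gestures at ``inductive refinement'' without specifying how the failure probabilities and cop counts compose across stages. As it stands, the argument would at best deliver $O(\sqrt{n}\,\log n)$ in the dense regime and nothing near the connectivity threshold.
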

    
    In this paper we consider a variant of the Cops and Robber game on hypergraphs, and in particular $k$-uniform hypergraphs, which we call \emph{$k$-graphs}, for $k \in \N_{\geq 2}$.
    The game is defined analogously to the $2$-graph case, with the only difference  being that the pieces move along hyperedges instead of edges. For the sake of brevity, when it is clear from the context that we are talking about a hypergraph, we will refer to hyperedges as simply edges. Similarly to $2$-graphs, we define the \emph{cop number} of a hypergraph $H$ to be
    \[
        \cop{H} \coloneqq \minp{}{m \in \N \colon m \text{ cops have a winning strategy to catch a robber on } H}.
    \]
   This game was first considered by Gottlob, Leone and Scarcello \cite{GoLeSc2003} and by Adler \cite{Adler2004}. For more recent results on the hypergraph game we refer the reader to \cite{SiBoLiSi2021}, where some classic results on the cop number of $2$-graphs are generalised to this setting.

    Note that by replacing every edge in a hypergraph by a clique, we arrive at an equivalent $2$-graph game on the same vertex set. Thus, the game of Cops and Robber on hypergraphs is equivalent to the $2$-graph game played on a restricted class of graphs.
    On the other hand, we can transform a graph $G$ into a $2k$-uniform hypergraph $H$ with $\cop{G} = \cop{H}$ via a simple blow-up construction: We replace each vertex $v$ in $G$ by $k$ vertices $\{v_1,v_2,\ldots v_k\}$ and form a hypergraph $H = H(G)$ on $\{ v_i \colon v\in V(G), i \in [k]\}$ by taking an edge of the form $\{u_1,u_2,\ldots, u_k, v_1,v_2,\ldots,v_k\}$ for each edge $e=\{u,v\}$ of $G$ (see Figure \ref{fig:blowupconstruct}). It is then easy to check that $\cop{G} = \cop{H}$, and moreover $|V(H)| = k |V(G)|$.

    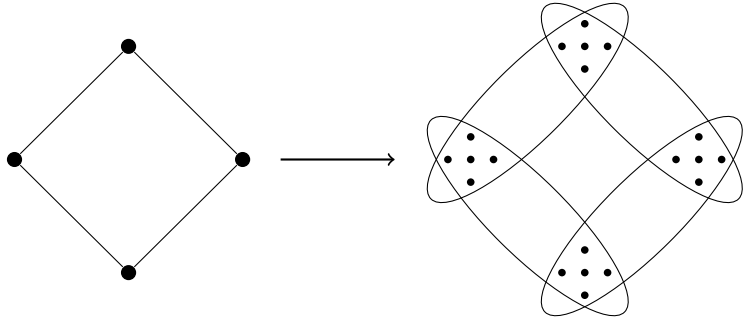
\begin{figure}
        \centering
        \begin{tikzpicture}
    \tikzstyle{small} = [circle, fill = black, minimum size=0.1cm, inner sep=0.2pt];
    \tikzstyle{occ} = [circle, fill = black, minimum size=0.2cm, inner sep=0.2pt];

    \coordinate (secondpic) at (6,0);
    \def\width{0.3};

    \node[occ] (v1) at (0,0) {};
    \node[occ] (v2) at (1.5,1.5) {};
    \node[occ] (v3) at (0,3) {};
    \node[occ] (v4) at (-1.5,1.5) {};
    
    \foreach \i/\j in {1/2,2/3,3/4,4/1}
    \draw[-] (v\i)--(v\j);

    \draw[->, thick] (2,1.5)--(3.5,1.5) ;

    \foreach \i in {1,...,4}
    \foreach \x/\y in {0/0,\width/0,0/\width,0/-\width,-\width/0}
    \node[small] at ($(v\i)+(secondpic)+(\x,\y)$) {};

    \foreach \i/\j/\x in {1/2/1,2/3/-1,3/4/1,4/1/-1}
    \draw ($0.5*(v\i)+0.5*(v\j)+(secondpic)$) ellipse [x radius = 1.8 cm, y radius = 0.5 cm, rotate = \x *45];
    
\end{tikzpicture} 
        \caption{An example of the blow-up construction to generate a $2k$-graph $H$ from a $2$-graph that has the same cop number. In this case, $k = 5$, $|V(H)| = 20$ and $\cop{H} = 2$. }
        \label{fig:blowupconstruct}
    \end{figure}
     From these two observations, it is easy to see that the following holds:
    \begin{align}
        \maxp{}{ \cop{G} \colon G \text{ a graph }, |V(G)|=\frac{2n}{k} } &\leq \maxp{}{ \cop{H} \colon H \text{ a }k\text{-graph }, |V(H)| = n } \label{e:graphtohypergraph}\\
        &\leq \maxp{}{ \cop{G} \colon G \text{ a graph }, |V(G)|=n } \nonumber.
    \end{align}
    
    In particular, as there are graphs with $\cop{G} = \Omega\left(\sqrt{n}\right)$, there are also $k$-graphs with $\cop{H} = \Omega\left(\sqrt{\frac{n}{k}}\right)$. It would seem surprising that such a simple construction, which is essentially graphical in nature, could capture the worst case behaviour for the cop number in hypergraphs of higher uniformity, but we conjecture that this bound is in fact tight. 
    \begin{conjecture}\label{conj:MeynielHyper}
        Let $H$ be a connected $k$-graph on $n$ vertices. Then $\cop{H} = O\left(\sqrt{\frac{n}{k}}\,\right)$.
    \end{conjecture}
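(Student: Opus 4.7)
Conjecture \ref{conj:MeynielHyper} generalises Meyniel's Conjecture \ref{conj:Meyniel} (take $k=2$), and the blow-up of Figure \ref{fig:blowupconstruct} turns any counterexample to Meyniel on $n/k$ vertices into one to Conjecture \ref{conj:MeynielHyper} on $n$ vertices, so the two problems are already equivalent in difficulty for $k=2$. The strategy I would try has two ingredients: a reduction of a general connected $k$-graph $H$ to a graph-like core, producing the $\sqrt{1/k}$ saving; and a Meyniel-type argument on that core.

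For the reduction, define $u\sim v$ on $V(H)$ iff $u$ and $v$ lie in exactly the same set of hyperedges. Twin classes are cop-strategically indistinguishable, so $\cop{H}=\cop{H/\!\sim}$, and this already settles the blow-up case, which collapses to a $2$-graph on at most $n/k$ vertices. For general $H$ I would seek an extension in which a constant number of ``bookkeeping'' cops guard the vertices shared by many hyperedges, reducing to the \emph{linear-like} regime where distinct hyperedges meet in $o(k)$ vertices. In that regime the clique expansion has $\Omega(nk)$ edges with each edge lying in $O(1)$ cliques, so that $H$ behaves like a graph on roughly $n/k$ ``super-vertices''.

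For the second ingredient, one plays in the clique expansion using the strategy of Pra\l at--Wormald \cite{Pralat_Wormald_CopsMeyniels} or Scott--Sudakov \cite{ScSu2011}, but measuring distances by hyperedge-distance in $H$: a cop on any vertex of a hyperedge $e$ covers all of $e$ in one move, so the relevant diameter is at most that of the super-vertex quotient on $\sim n/k$ vertices. A Meyniel-type covering argument there then produces the desired $O\bigl(\sqrt{n/k}\bigr)$ bound.

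The main obstacle is the reduction step: when the twin relation is trivial yet pairs of hyperedges still share $\Theta(k)$ vertices, it is not clear how to exploit the hyperedge structure without incurring an extra logarithmic factor---exactly the loss that appears in our random-$k$-graph theorem. Any genuine resolution of this overlap-structure question would already imply Meyniel's conjecture, so we expect that substantially new ideas beyond those currently available for Conjecture \ref{conj:Meyniel} will be required.
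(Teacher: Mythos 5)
You are addressing \Cref{conj:MeynielHyper}, which is a \emph{conjecture}: the paper does not prove it, and only offers partial evidence — \Cref{thm:main:Meyniel,thm:main:regimes,thm:main:Gknp} give the bound up to a $\log$-factor, and only for \smoothexp $k$-graphs (hence, whp, for $\Gknp$ in a restricted range of $k$ and $p$). Your proposal is likewise not a proof, and to your credit you say so; so the relevant question is whether the programme you sketch could plausibly be completed, and I think it cannot in its current form, for two reasons.

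First, the closing step — ``a Meyniel-type covering argument on a core of roughly $n/k$ super-vertices then gives $O\bigl(\sqrt{n/k}\bigr)$'' — presupposes Meyniel's conjecture for general graphs, which is open: the best known general bound is $n2^{-(1+o(1))\sqrt{\log n}} = n^{1-o(1)}$ (Scott--Sudakov, Lu--Peng), and Pra\l{}at--Wormald applies only to random graphs above the connectedness threshold. The paper itself observes that \Cref{conj:MeynielHyper} for $k=n^{\alpha}$ \emph{implies} \Cref{conj:Meyniel}, so no argument of this shape can terminate without first resolving the graph case. Second, the reduction is only established in the degenerate blow-up example. The twin quotient collapses nothing unless vertices have \emph{identical} edge sets; a bounded number of ``bookkeeping'' cops cannot in general neutralise all pairs of hyperedges sharing $\Theta(k)$ vertices, since there can be $\Omega(n)$ such pairs with no small hitting set; and even in the linear regime there is no quotient onto $n/k$ super-vertices — a Steiner-type $3$-graph has $n$ genuine, pairwise non-twin vertices and no canonical grouping into $k$-blocks, so the claim that it ``behaves like a graph on $n/k$ vertices'' is unsupported. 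Note that the factor-$k$ saving in the paper's partial results arises from a different mechanism entirely: the cops surround the robber's $r$-th \emph{edge}-neighbourhood $\eNhd[r]{v}$, which by Property \ref{Property:neighbourhood_estimates:eNhd} is smaller than the corresponding vertex-neighbourhood by a factor of $k$ (see \Cref{claim:injection_edgestrat}), not from shrinking the vertex set. If you want to push on the conjecture, strengthening that edge-surrounding argument under weaker expansion hypotheses seems a more promising direction than a quotient reduction.
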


Clearly Conjecture \ref{conj:MeynielHyper} is a generalisation of Meyniel's conjecture, but we note further that for any polynomial function $k:=k(n) = n^{\alpha}$ with $\alpha<1$, Conjecture \ref{conj:MeynielHyper} implies Meyniel's conjecture. Indeed, if $\cop{H} = O\left(\sqrt{\frac{n}{k(n)}}\right) = O\left(n^{\frac{1-\alpha}{2}} \right)$  for all $n$-vertex $n^{\alpha}$-graphs, then by \eqref{e:graphtohypergraph}
  \[
\maxp{}{ \cop{G} \colon G \text{ a graph }, |V(G)|= 2m } \leq \maxp{}{ \cop{H} \colon H \text{ a }m^{\frac{\alpha}{1-\alpha}}\text{-graph }, |V(H)| = m^{\frac{1}{1-\alpha}} } \leq  O\left(\sqrt{m} \right). 
\]

    As with Meyniel's Conjecture, a first step towards Conjecture \ref{conj:MeynielHyper} is to consider the behaviour of the cop number of \emph{random} $k$-graphs.

\subsection{Main results}\label{subsec:Main}

The \emph{$k$-uniform binomial random hypergraph}, which we denote by $\Gknp$, is a random $k$-graph with vertex set $[n]$ in which each edge, that is, each subset of $[n]$ of size $k$, appears independently with probability $p$. Although the main focus of this paper is $\Gknp$, the strategies we develop for the cops work in a more general class of $k$-graphs, those satisfying certain expansion properties. 

Very roughly, if we denote by $N^r_V(v)$ the vertices that are at most at a fixed distance $r$ from $v$, then in $\Gknp$ we expect this set to be growing exponentially quickly in $r$, with its size tightly concentrated around its expectation. Furthermore, for different vertices $v$ and $w$ we do not expect the neighbourhoods $N^r_V(v)$ and $N^r_V(w)$ to have a large intersection, and so, for small subsets $A \subseteq [n]$ we expect the number of vertices at most at a fixed distance $r$ from $A$ to be around $|A|$ times the size of $N^r_V(v)$. Similarly, we expect the set of edges $N^r_E(v)$ at most at a fixed distance $r-1$ from $v$ to be growing at some uniform exponential rate, and for ranges of $p$ where the random hypergraph is sparse enough, and so few pairs of edges have a large intersection, this rate of growth should be roughly $\frac{1}{k}$ times that of the vertex-neighbourhoods. 

Informally, given $\smoothconst>0$ we say that a graph is \smoothexp if the sizes of its vertex and edge-neighbourhoods have this uniform exponential growth, up to some multiplicative error in terms of $\smoothconst$. See Definition \ref{Property:neighbourhood_estimates} for a precise definition of this notion.

Our first result supports Conjecture \ref{conj:MeynielHyper} up to a $\log$-factor for $k$-graphs that are \smoothexp for a fixed expansion constant $\smoothconst$. 
\begin{thm}\label{thm:main:Meyniel}
Let $k \in \N_{\geq 2}$, let $\smoothconst >0$ and let $G$ be a \smoothexp $k$-graph on $n$ vertices. Then
\[
\cop{G} \leq 20 \smoothconst^{-2} \sqrt{\frac{n}{k}}\, \log n .
\]
\end{thm}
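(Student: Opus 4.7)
The plan is to adapt the random-placement-plus-coordinated-chase framework of Pra\l at and Wormald \cite{Pralat_Wormald_CopsMeyniels} for random graphs to the \smoothexp hypergraph setting, exploiting both the vertex- and edge-neighbourhood growth estimates afforded by the expansion property. The strategy must balance a \emph{coverage} condition (how many cops are needed to sit within radius $r$ of every vertex) against a \emph{chasing} condition (how quickly a cop can close the gap once the robber is nearby), and the hypergraph-specific saving arises because one cop occupying a hyperedge simultaneously dominates all $k$ of its vertices.

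First, I would choose a radius $r$ so that vertex $r$-neighbourhoods have size of order $\sqrt{nk}$; this is well-defined because \smoothexp forces $|N_V^r(v)|$ to grow at a uniform exponential rate in $r$, approximately independently of $v$. I would then place $m = 20\smoothconst^{-2}\sqrt{n/k}\log n$ cops uniformly at random on $V(G)$. The expected number of cops in each vertex $r$-ball is then $\Theta(\smoothconst^{-2}\log n)$, and a Chernoff estimate combined with a union bound over vertices shows that with probability $1-o(1)$ every $r$-ball contains at least $\Omega(\smoothconst^{-2}\log n)$ cops. This "garrison" property guarantees a large reservoir of cops near any initial robber location.

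The cops would then execute a two-phase strategy. In the \emph{approach phase}, each garrison cop moves along hyperedges to shorten its distance to the robber, with the vertex-expansion of $G$ ensuring that a distance-reducing edge is always available. In the \emph{capture phase}, the garrison cops collectively enumerate the robber's possible trajectories of length $O(r)$, whose total count is controlled via the edge-neighbourhood estimates, and pre-commit cops to intercept each trajectory. The key hypergraph-specific point is that a single cop sharing a hyperedge with the robber wins the game immediately, so each cop effectively covers $k$ potential robber positions at once; this is precisely what absorbs the robber's $(k-1)$-fold branching per move and yields the $\sqrt{1/k}$ saving.

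The main technical obstacle is quantifying the capture phase tightly enough to recover the constant $20\smoothconst^{-2}$. Two factors of $\smoothconst^{-1}$ appear because the $(1\pm\smoothconst)$ expansion estimate is invoked twice, once when counting cops per vertex $r$-ball and once when bounding the total number of robber trajectories via the edge-neighbourhoods; the $\log n$ factor comes from the union bound needed to guarantee the garrison property uniformly over all starting positions. Putting the garrison, approach, and capture bounds together, and choosing $r$ to equalise the coverage and chasing costs, produces the stated bound.
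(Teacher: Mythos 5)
Your proposal has a genuine quantitative gap in the ``garrison'' step, and the capture mechanism built on top of it cannot work as described. If you choose $r$ so that $|\vNhd[r]{v}| = \Theta(\sqrt{nk})$ and place $m = 20\smoothconst^{-2}\sqrt{n/k}\,\log n$ cops uniformly at random, then the expected number of cops inside the $r$-ball of any fixed $v$ is $m\cdot\Theta(\sqrt{nk})/n = \Theta(\smoothconst^{-2}\log n)$, exactly as you compute. But after $r$ moves the robber can sit in any of the $|\eNhd[r]{v}| = \Theta(\smoothconst^{-1}\sqrt{n/k})$ edges of his $r$-th edge-neighbourhood (this is already after the factor-$k$ saving you invoke), and a set of $O(\log n)$ cops cannot intercept $\Theta(\sqrt{n/k})$ pairwise-distinct terminal edges: ``pre-committing cops to trajectories'' needs at least one cop per reachable edge the robber could end on. The approach phase does not rescue this, since the claim that a cop can always strictly decrease its distance to a moving robber is false -- on an expanding hypergraph a single pursuing cop gains nothing per round, which is precisely why surrounding strategies are used in this literature. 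So the cops that actually do the work cannot all come from the $r$-ball around the robber's start.

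What the paper does instead is a pure surrounding argument in the style of {\L}uczak--Pra{\l}at, with no approach phase: each vertex is independently made a cop with probability $q$, and for every possible start $v$ one exhibits an \emph{injection} from $\eNhd[j]{v}$ (or from $\vNhd[j-1]{v}$) into the cop set such that each target edge (vertex) has its own cop within distance $j$ of it. Crucially these cops live within distance roughly $2j$ of $v$, not $j$, which is where the extra $\Theta(\sqrt{n/k})$ cops come from; the injection is produced by verifying Hall's condition on the bipartite target--cop graph via the expansion properties \ref{Property:neighbourhood_estimates:eNhd}--\ref{Property:neighbourhood_estimates:vNhd:edges}, Chernoff bounds, and union bounds over subsets and over $v$ (Claims \ref{claim:injection_vertexstrat} and \ref{claim:injection_edgestrat}). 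Theorem \ref{thm:main:Meyniel} is then not proved directly but read off from Theorem \ref{thm:main:regimes}: the regime-dependent bounds are monotone in $d$ within each regime, and their worst case, attained at $d = (n/k)^{1/2j}$ or $d = (nk)^{1/2j}$, is exactly $20\smoothconst^{-2}\sqrt{n/k}\,\log n$. To repair your argument you would need to replace the garrison-plus-chase scheme with such a matching between the robber's $j$-step reachable set and cops at distance up to $j$ from each element of that set.
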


In fact, depending on the relationship between the average degree $d$ of the hypergraph, its uniformity $k$ and its order $n$, we can give a more refined bound for the cop number of a \smoothexp hypergraph.

\begin{thm}\label{thm:main:regimes}
Let $k \in \N_{\geq 2}, \xi \in (0,1]$ be fixed and let $G$ be a \smoothexp $k$-graph on $n$ vertices with average vertex degree\footnote{Here the degree of a vertex $v$ is the number of vertices which share an edge with $v$, rather than the number of edges containing $v$.} $d=d(G)$.
For all $j \in \N$ the cop number of $G$ satisfies the following.

\begin{enumerate}   
    \item\label{r:a} If\/ $n^\frac{1}{2j+1} \leq d \leq \left(\frac{n}{k}\right)^\frac{1}{2j}$, then with $\regimefactor = \left\lceil \frac{n}{d^{2j+1}} \log n  \right\rceil$,
    \begin{equation*}\label{thm:main:regimes:n(2j+1)}
    \cop{G}  \leq 20 \smoothconst^{-2} d^{j} \regimefactor.
    \end{equation*}
    
    \item\label{r:b} If\/ $\left(\frac{n}{k}\right)^\frac{1}{2j} \leq d \leq n^\frac{1}{2j}$, then 
    \begin{equation*}\label{thm:main:regimes:(n/k)(2j)}
    \cop{G} \leq 20 \smoothconst^{-1} \frac{n}{k d^j } \log n .
    \end{equation*}

     \item\label{r:c} If\/ $n^\frac{1}{2j} \leq d \leq (nk)^\frac{1}{2j}$, then with $\regimefactor = \maxp{}{\left\lceil \frac{n}{d^{2j}} \log n  \right\rceil ,\left\lceil \frac{k}{d^{j}} \log n  \right\rceil}$,
    \begin{equation*}\label{thm:main:regimes:n(2j)}
    \cop{G} \leq  20 \smoothconst^{-2} \frac{d^{j}}{k} \regimefactor.
    \end{equation*}

    \item\label{r:d} If\/ $(nk)^\frac{1}{2j} \leq d \leq n^\frac{1}{2j-1}$, then 
    \begin{equation*}\label{thm:main:regimes:(nk)(2j)}
    \cop{G} \leq 20 \smoothconst^{-1} \frac{n}{d^{j}} \log n .
    \end{equation*}
\end{enumerate}
\end{thm}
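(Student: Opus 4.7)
The plan is to adapt the shadow strategy of \L uczak and Pra\l at~\cite{Luczak_Pralat_RobberZigZag} to the hypergraph setting, simultaneously exploiting the growth of vertex-neighborhoods $N_V^r(v)$ and of edge-neighborhoods $N_E^r(v)$. This produces two complementary families of cop-placement strategies whose minima, over a case analysis on $d$, give the four regimes (a)--(d).

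First I would establish a shadow lemma: a single cop starting at $v$ can, after at most $r$ rounds, guarantee that the robber lies outside $N_V^r(v)$, or else the robber is already caught. This follows from a standard geodesic projection argument: the cop maintains a virtual position on a shortest path to the robber and mirrors the robber's moves. The edge version of this lemma uses that a cop traversing a hyperedge simultaneously threatens all $k$ of its vertices, giving a factor-$k$ saving in the number of vertices eliminated per landmark per round.

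Next, using the $\xi$-expansion hypothesis and Properties~\propA, I would verify that $|N_V^r(v)|$ is approximately $\xi d^r$ and that $|N_E^r(v)|$ is approximately $\xi d^{r+1}/k$ in the relevant ranges of $r$. I would then select a set of ``landmark'' vertices whose $r$-neighborhoods cover $V(G)$: by the expansion estimates, approximately $n/(\xi d^r)$ landmarks suffice in the vertex version and approximately $nk/(\xi d^{r+1})$ in the edge version. Placing cops at such a cover and running the shadow strategy in parallel catches the robber within $r$ rounds. An iterative refinement covers the final ball by balls of radius $r-1, r-2, \ldots$, with the total cop count dominated by the largest layer. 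Balancing the two choices of starting radius in each of the vertex and edge versions yields the candidate quantities $d^j$, $n/d^j$, $d^j/k$, and $n/(kd^j)$, whose ranges of optimality are precisely regimes (a)--(d) respectively. The $\regimefactor$ factors in (a) and (c) compensate for boundary cases where the chosen radius does not perfectly balance the two growth rates and an extra $\log n$-sized bundle of cops per landmark is needed to absorb the multiplicative error from the $\xi$-expansion as it propagates across the iterated covers.

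The main obstacle I expect is the clean formulation of the edge-shadow strategy. In the graph case, adjacency is a symmetric pairwise relation, whereas in a hypergraph a cop must choose a specific sequence of incident hyperedges to project along, and one must verify that this projection is well defined and consistent with the cop's allowed motion. A secondary issue is tracking the cumulative $(1 \pm \xi)$-type multiplicative errors across the $O(\log_d n)$ iteration levels, so that they are absorbed by the $20\xi^{-2}$ prefactor in the stated bound. Once these technicalities are settled, the four regimes (a)--(d) follow from a case analysis on the value of $d$ relative to $n$ and $k$.
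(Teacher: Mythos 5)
Your proposal rests on a ``shadow lemma'' asserting that a single cop starting at $v$ can, within $r$ rounds, guarantee that the robber is not in $N_V^r(v)$. This is false: a ball of radius $r$ in a $\xi$-expanding $k$-graph contains on the order of $d^r$ vertices, and one cop cannot guard a region of that size -- the robber simply routes around the cop's current position. The geodesic-projection argument you invoke (\`a la Aigner--Fromme) lets one cop guard a single shortest \emph{path} after a setup phase, not a ball; there is no version of it that eliminates $\Theta(d^r)$ vertices per cop. Consequently the ``landmark cover'' count $n/(\xi d^r)$ landmarks, each served by one cop, does not yield a valid strategy, and the quantities $d^j$, $n/d^j$, $d^j/k$, $n/(kd^j)$ cannot be derived this way.

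The actual argument is a \emph{surrounding} strategy, and its combinatorial core is a matching problem that your proposal does not address. If the robber starts at $v$, the cops must physically occupy every vertex of $N_V^{j-1}(v)$ within $j$ moves (or every edge of $N_E^j(v)$ within $j$ moves), which requires one \emph{distinct} cop within distance $j$ of each target. The paper places cops at random with density $q$ and then verifies, via Hall's theorem applied to the bipartite graph between targets and nearby cops, that such a system of distinct representatives exists simultaneously for every possible starting vertex $v$; the Chernoff-plus-union-bound analysis of Hall's condition (split according to whether the subset $A$ of targets satisfies $|A| \le n/d^j$ or not, using Properties \ref{Property:neighbourhood_estimates:vNhd} and \ref{Property:neighbourhood_estimates:vNhd:edges}) is what produces the densities $q \approx \xi^{-1}d^{-j}\log n$, $q\approx \xi^{-1}(kd^j)^{-1}\log n$, etc., and hence the four bounds. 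The ceiling factors $\lambda$ in regimes \eqref{r:a} and \eqref{r:c} arise because in those regimes the binding constraint is the \emph{number of targets} ($d^{j}$, resp.\ $d^j/k$) rather than the cop density, and $q$ must still be inflated so that expected cop counts in neighbourhoods exceed $\log n$ times the set size for the union bound over subsets to close -- not from error propagation across iterated covers. Without the distinct-representatives step your strategy assigns one cop to cover many targets at once, which is not a legal cop move, so the gap is essential rather than technical.
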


Let us explain in more detail the bounds in Theorem \ref{thm:main:regimes}. In general, the upper bounds in \Cref{thm:main:regimes} are increasing in $d$ in the regimes \eqref{r:a} and \eqref{r:c} and decreasing in $d$ in the regimes \eqref{r:b} and \eqref{r:d}. In particular, and perhaps surprisingly, if we fix $k$ and $n$ and vary $d$, in certain regimes increasing the average degree, and hence the number of edges, can help the cops, and in other regimes increasing the number of edges can help the robber.

We note that some of the regimes of Theorem \ref{thm:main:regimes} can `collapse' if the left border of a regime is larger than its right border. Specifically, this can happen in regime \eqref{r:a}, if we have $d \leq n^\frac{1}{2j}$ and 
$n^\frac{1}{2j+1} > \left(\frac{n}{k}\right)^\frac{1}{2j}$ or equivalently $k > n^\frac{1}{2j+1}$ and it can happen in regime \eqref{r:d}, if $d \leq n^\frac{1}{2j-1}$ and $(nk)^\frac{1}{2j} > n^\frac{1}{2j-1}$, or equivalently $k > n^\frac{1}{2j-1}$.
However, under the reasonable assumption that $G$ is connected, we have $k \leq d$, and so this second case does not occur, and the first case only occurs if $n^\frac{1}{2j+1} \leq k \leq d \leq n^\frac{1}{2j}$ holds for some $j \in \N$. In this case, regime \eqref{r:a} collapses and the cop number is bounded as in \eqref{r:b}.
Furthermore, we note that the second argument of the maximum in the definition of $\regimefactor$ in regime \eqref{r:c} is only relevant in the special case where $k \geq \sqrt{n}$. In that case, the factor takes its largest value of $\log n$ for the smallest possible value of $d$, namely $d=k$. For increasing $d$, the factor then decreases until $d$ is larger than $k$ by a $\log$-factor, at which it attains its smallest value of $1$. Otherwise, in regimes \eqref{r:a} and \eqref{r:c}, the factor $\regimefactor$ takes its largest value of $\log n$ at the left border of the respective regimes of $d$. Again, it then decreases with increasing $d$ until it takes the value of $1$, which happens as soon as $d$ is bound away from the left border by a sufficiently large multiplicative factor ($\log^{\frac{1}{2j+1}} n$ in \eqref{r:a} and $\log^{\frac{1}{2j}} n$ in \eqref{r:c}).

Our final result shows that whp $\Gknp$ satisfies the desired expansion properties as long as $k$ is growing with $n$ and $p$ is not too small.

\begin{thm}\label{thm:main:Gknp}
There exists a universal constant $\smoothconst > 0$ such that if $k = k(n), p=  p(n)>0$ are such that $k = \omega(\log n)$ and $\frac{n}{k} \geq p \binom{n-1}{k-1} = \omega\left(\log^3 n\right)$, then whp $\Gknp$ is \smoothexp. 
\end{thm}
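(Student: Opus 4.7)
The plan is to run a BFS from each vertex and apply Chernoff bounds layer by layer to verify each clause of Definition \ref{Property:neighbourhood_estimates}. Let $\mu := p\binom{n-1}{k-1}$ be the expected number of edges through each vertex, so the hypothesis gives $\mu = \omega(\log^3 n)$ and $\mu \leq n/k$. Write $d$ for the expected number of neighbours of a vertex, so $d \approx (k-1)\mu$ when $\mu k/n$ is small and $d \leq n-1$ always; since $k = \omega(\log n)$, one has $d = \omega(\log^4 n)$, which is what drives the Chernoff concentration.

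For the base case $r = 1$, the number of incident edges at any vertex $v$ is $\mathrm{Bin}(\binom{n-1}{k-1}, p)$ with mean $\mu$. Chernoff with deviation $1/\log n$ yields tail probability $\exp(-\omega(\log n))$, which a union bound handles simultaneously across all $v$. To pass from the edge-count to $|N_V(v)|$, I would control the expected number of pairs of distinct edges through $v$ meeting in a second common vertex by $O(\mu^2 k^2 / n) = O(d \cdot \mu k / n)$, which is at most a constant multiple of $d$ under $\mu \leq n/k$, yielding $|N_V(v)| = (1 \pm \xi_0) d$ whp for every $v$ and some small universal constant $\xi_0 > 0$. The inductive step for $r \geq 2$ then exposes the BFS ball around $v$ layer by layer: given a current ball of size $s_{r-1}$ and boundary $L_{r-1}$, each $u \in L_{r-1}$ still has an essentially $\mathrm{Bin}(\binom{n-1}{k-1}, p)$-distributed number of unrevealed incident edges, since only $O(s_{r-1})$ potential edges through $u$ have been seen and $s_{r-1} = o(n)$ throughout. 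Each newly revealed edge adds at most $k-1$ candidate vertices, and the expected number of collisions---two boundary vertices producing the same new vertex, or a new edge being fully spanned by the current ball---is controlled by first-moment calculations of the same flavour as the base case, again using $\mu \leq n/k$. Combined with Chernoff this gives $|L_r| = (1 \pm \xi_r) d_{\mathrm{eff}} \cdot |L_{r-1}|$ with $\xi_r$ remaining below a universal constant $\xi$ across all $O(\log_d n)$ layers. Intersection bounds for $N_V^r(v) \cap N_V^r(w)$ and the edge-neighbourhood estimates $|N_E^r(v)|$ follow by completely analogous BFS-based arguments.

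The main obstacle is controlling the accumulated multiplicative error across the $\Theta(\log n)$ induction steps without letting it explode. This is precisely what the hypothesis $\mu = \omega(\log^3 n)$ is designed to afford: one factor of $\log n$ gives Chernoff enough room for a per-layer error of $o(1/\log n)$, a second lets the per-layer errors sum without exceeding $\xi$, and a third feeds the union bound over the $n$ possible roots and $O(\log n)$ radii. The condition $\mu \leq n/k$ is what keeps the BFS sufficiently tree-like for the collision terms to stay subdominant throughout the exploration, while $k = \omega(\log n)$ enters when bounding certain configurations of overlapping edges and in comparing vertex- and edge-neighbourhood growth rates.
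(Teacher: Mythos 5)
There is a genuine gap: your argument only establishes the expansion properties for single vertices $v$ (plus pairwise intersections $N_V^r(v)\cap N_V^r(w)$), but being \smoothexp (Definition~\ref{Property:neighbourhood_estimates}) requires the lower bound $\smoothconst\min\{|A|d^r,n\}\leq |N_V^r(A)|$ for \emph{every} subset $A\subseteq V(G)$, and likewise $\smoothconst\min\{|B|kd^r,n\}\leq|N_V^r(B)|$ for every subset $B$ of edges. These set-level bounds are exactly what is consumed when verifying Hall's condition in Claims~\ref{claim:injection_vertexstrat} and~\ref{claim:injection_edgestrat}, so they cannot be weakened. Pairwise intersection control does not yield them: the inclusion--exclusion bound $|N_V^r(A)|\geq\sum_{v\in A}|N_V^r(v)|-\sum_{v\neq w}|N_V^r(v)\cap N_V^r(w)|$ becomes vacuous as soon as $|A|$ is large enough for the second sum to dominate, and a union bound over the $\binom{n}{a}$ sets of size $a$ needs failure probability of order $n^{-\omega(a)}$ per set, which a per-vertex BFS with error $\exp(-\omega(\log n))$ does not supply. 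The paper instead applies Chernoff directly to the edge count of each set $A$ (a single binomial with mean roughly $a\dhat/k=a\cdot\omega(\log^3 n)$, giving tails that beat $\binom{n}{a}$; Lemma~\ref{lemma:edge_neighbourhood_estimate}) and proves a separate first-moment lemma bounding the number of edge sets $B$ whose vertex span $|V_B|$ is much smaller than $|B|k$ (Lemma~\ref{lemma:edge_set_overlap}); this second ingredient has no analogue in your sketch, and it is where the hypotheses $k=\omega(\log n)$ and $\dhat\leq n$ do their real work.

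A secondary problem is the tree-likeness claim. The number of already-determined potential edges through a boundary vertex $u$ is not $O(s_{r-1})$ but of order $\binom{n-1}{k-1}\cdot k s_{r-1}/n$, so the ``essentially fresh binomial'' approximation requires $s_{r-1}=o(n/k)$, not merely $s_{r-1}=o(n)$. Since the lower bounds in \ref{Property:neighbourhood_estimates:vNhd} and \ref{Property:neighbourhood_estimates:vNhd:edges} must hold all the way up to neighbourhoods of size $\Theta(n)$, the regime where the ball exceeds $n/k$ needs a separate argument (the paper passes to a maximal admissible subset and uses monotonicity in the second halves of Lemmas~\ref{lemma:edge_neighbourhood_estimate}--\ref{l:vertex_expansion}). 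Your layer-by-layer error accounting and the roles you assign to the three logarithmic factors are otherwise consistent with the paper's inductive scheme.
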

Let us give some intuition for the conditions on $k$ and $p$. The value $p \binom{n-1}{k-1}$ is roughly the expected number of edges every vertex in $\Gknp$ meets, and the lower bound on this quantity ensures that we can assume this is concentrated around its expectation. On the other hand, $p k \binom{n-1}{k-1}$ is roughly the expected degree of a vertex in $\Gknp$, and so it is natural to restrict this to be at most $n$. 

Note that it follows from Theorems \ref{thm:main:Meyniel} and \ref{thm:main:Gknp} that Conjecture \ref{c:main} holds for the same range of $n,p$ and $k$ up to polylogarithmic factors. 

\begin{coro}\label{c:main}
If\/ $k = k(n), p=  p(n)>0$ are such that $k = \omega(\log n)$ and $\frac{n}{k} \geq p \binom{n-1}{k-1} = \omega(\log^3 n)$, then whp $c\left(\Gknp\right) = \tilde{O}\left( \sqrt{\frac{n}{k}}\,\right)$.
\end{coro}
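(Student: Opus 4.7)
The plan is to observe that Corollary \ref{c:main} follows almost immediately as a combination of the two preceding main theorems, with essentially no additional work beyond bookkeeping. Let $\smoothconst > 0$ be the universal constant provided by \Cref{thm:main:Gknp}, and let $k = k(n)$ and $p = p(n)$ satisfy the hypotheses $k = \omega(\log n)$ and $\frac{n}{k} \geq p\binom{n-1}{k-1} = \omega(\log^3 n)$.

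First, I would invoke \Cref{thm:main:Gknp} directly to conclude that whp $\Gknp$ is \smoothexp. Let $\mathcal{E}$ denote this whp event. On $\mathcal{E}$, the graph $\Gknp$ is a deterministic \smoothexp $k$-graph on $n$ vertices, so \Cref{thm:main:Meyniel} applies and gives
\begin{equation*}
\cop{\Gknp} \leq 20\,\smoothconst^{-2}\, \sqrt{\frac{n}{k}}\,\log n.
\end{equation*}
Since $\smoothconst$ is a universal constant, the prefactor $20 \smoothconst^{-2}$ is an absolute constant and the bound collapses to $O\!\left(\sqrt{\tfrac{n}{k}}\,\log n\right) = \tilde{O}\!\left(\sqrt{\tfrac{n}{k}}\right)$. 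Because $\mathcal{E}$ holds whp, this bound on $\cop{\Gknp}$ also holds whp, proving the corollary.

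There is no real obstacle to overcome: the two ingredients have been designed to fit together, with \Cref{thm:main:Meyniel} providing a deterministic upper bound on the cop number in terms of the expansion constant $\smoothconst$ and \Cref{thm:main:Gknp} showing that $\Gknp$ almost surely exhibits this expansion for the prescribed range of $k$ and $p$. The only mild point to verify is that the constant $\smoothconst$ supplied by \Cref{thm:main:Gknp} is indeed universal (independent of $n$, $k$, $p$), so that $20\smoothconst^{-2}$ can be absorbed into the $\tilde{O}$ notation; this is exactly what is asserted by the statement of \Cref{thm:main:Gknp}. Hence the proof reduces to a one-line chain of implications.
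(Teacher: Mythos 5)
Your proposal is correct and is exactly the paper's argument: the corollary is stated as an immediate consequence of combining \Cref{thm:main:Gknp} (whp $\Gknp$ is \smoothexp for a universal constant $\smoothconst$) with the deterministic bound of \Cref{thm:main:Meyniel}, absorbing the constant $20\smoothconst^{-2}$ and the $\log n$ factor into the $\tilde{O}$ notation. No differences to report.
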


Furthermore, \Cref{thm:main:Gknp} allows us to apply Theorem \ref{thm:main:regimes} to $\Gknp$ for a broad range of parameters and we can bound $c\left(\Gknp\right)$ more precisely in certain ranges. It turns out that a sensible parameterisation to take is as follows. Let us define $\hat{d} = \hat{d}(n,p,k) \coloneqq pk \binom{n-1}{k-1}$, which is roughly the expected degree of a vertex in $\Gknp$ and let $\hat{d} = n^\alpha$ and $ k = n^\beta$ for some $0 < \beta \leq \alpha \leq 1$.  We consider the function $f_\beta \colon (\beta,1) \to \R$ defined as
\[
f_\beta(\alpha) \coloneqq \frac{\log \left(\bar{c} \left(\Gknp \right)\right)}{\log n},
\] 
with $\bar{c}$ being the upper bound for the cop number obtained from Theorem \ref{thm:main:regimes}. It follows that $f_\beta$ again has a characteristic zigzag shape, see Figure \ref{fig:hyperzigzag}. In contrast to the case of $\Gnp$ (see Figure \ref{fig:zigzag}), the zigzag shape in $\Gknp$ arises as the intersection of two complementary zigzags, coming from two different strategies, and so has twice as many peaks and troughs. In particular, it can be seen that $f_\beta(\alpha) \leq (1+o(1))\frac{1-\beta}{2}$ for all $\alpha \in (\beta,1)$, corresponding to the bound of $\Tilde{O}\left(\sqrt{\frac{n}{k}}\,\right)$ on the cop number.

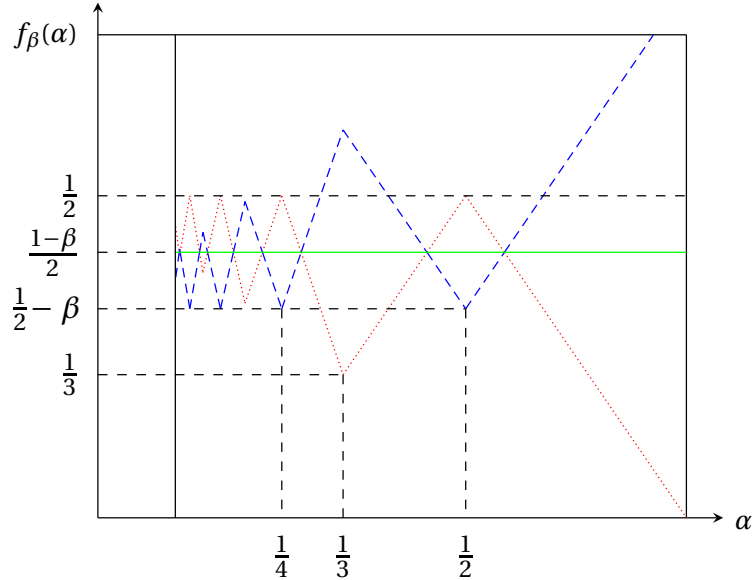
\begin{figure}[ht!]
\begin{tikzpicture}[scale=1.2]
    \def\vbeta{2/19} %Set the value of \beta
    \def\i{5} %Set the number of zigzags to draw
    \def\opp{0.4} %opacity
    
    \begin{axis}[xmin=0, xmax=0.85, ymin= 0.2, ymax = 0.68,
    axis lines = left,
	xtick = {0.25,0.3333,0.5},
    xticklabels= {$\frac{1}{4}$,$\frac{1}{3}$,$\frac{1}{2}$},
    xtick style={draw=none},
    ytick = {0.333334,0.5-\vbeta, 0.5-0.5*\vbeta,0.5},
    yticklabels= {$\frac{1}{3}$,$\frac{1}{2}-$ \small{$\beta$},$\frac{1-\beta}{2}$,$\frac{1}{2}$},
    ytick style={draw=none},
    ]

	\draw[-] ({axis cs:0,0}|-{ axis cs:0,0.65}) -- ({axis cs:0.8,0}|-{axis cs:0,0.65});
    \draw[-] ({axis cs:0.8,0}|-{ axis cs:0.8,0.65}) -- ({axis cs:0.8,0}|-{axis cs:0.8,0});

	  \draw[dashed,opacity = 0.5] ({axis cs:0.25,0}|-{ axis cs:0,0.5-\vbeta}) -- ({axis cs:0.25,0}|-{axis cs:0,0}); 
	 
	  \draw[dashed,opacity = 0.5] ({axis cs:0.3333,0}|-{axis cs:0,0.3333})--({axis cs:0.3333,0}|-{ axis cs:0,0}); 
    
     \draw[dashed,opacity = 0.5] ({axis cs:0.5,0}|-{axis cs:0,0.5-\vbeta})--({axis cs:0.5,0}|-{ axis cs:0,0});
     
    \foreach \a in {1,...,\i}
    {
     \addplot[densely dotted, opacity = \opp, red,domain=1/(2*\a):(1+\vbeta)/(2*\a)] {1-\a*x};       
       \addplot[densely dotted, red,domain=(1+\vbeta)/(2*\a):1/(2*\a-1)] {1-\a*x};
  
       }
        \foreach \b in {1,...,\i}
        {
       \addplot[densely dotted, red,domain=1/(2*\b +1):(1-\vbeta)/(2*\b)] {\b*x};
       \addplot[densely dotted, opacity = \opp, red,domain=(1-\vbeta)/(2*\b):1/(2*\b)] {\b*x};
       }
       
       \foreach \a in {2,...,\i}
       {
       \addplot[densely dashed, blue, domain=1/(2*\a):(1+\vbeta)/(2*\a)] {\a*x - \vbeta};
       \addplot[densely dashed, blue, opacity = \opp, domain=(1+\vbeta)/(2*\a):1/(2*\a-1)] {\a*x - \vbeta};
       }
       \addplot[densely dashed, blue, domain=1/(2):(1+\vbeta)/(2)] {x - \vbeta};
       \addplot[densely dashed, blue, opacity = \opp, domain=(1+\vbeta)/(2):0.755] {x - \vbeta};
        \foreach \b in {1,...,\i}
        {
       \addplot[densely dashed, opacity = \opp, blue,domain=1/(2*\b +1):(1-\vbeta)/(2*\b)] {1 - \b*x - \vbeta};
       \addplot[densely dashed,  blue,domain=(1-\vbeta)/(2*\b):1/(2*\b)] {1 - \b*x - \vbeta};
       }
       
     \addplot[green,domain=\vbeta:0.8] {1/2-\vbeta/2}; 
     
     \fill [white] (0.04,0.31) rectangle (\vbeta,0.6);
     \draw [-] (\vbeta,0)--(\vbeta,0.65); 
     
	\draw[dashed,opacity = 0.5] ({axis cs:0,0}|-{ axis cs:0,0.3333334}) -- ({axis cs:0.3333334,0}|-{axis cs:0,0.3333334});
	
	\draw[dashed,opacity = 0.5] ({axis cs:0,0}|-{ axis cs:0,0.5-\vbeta}) -- ({axis cs:0.5,0}|-{axis cs:0,0.5-\vbeta});
	
	\draw[dashed,opacity = 0.5] ({axis cs:0,0}|-{ axis cs:0,0.5-0.5*\vbeta}) -- ({axis cs:\vbeta,0}|-{axis cs:0,0.5-0.5*\vbeta});
	
	\draw[dashed,opacity = 0.5] ({axis cs:0,0}|-{ axis cs:0,0.5}) -- ({axis cs:0.8,0}|-{axis cs:0,0.5});

     \coordinate (A) at (axis cs:0.01,0.75);
     \coordinate (B) at (axis cs:1.05,0.208);
    \end{axis}
    \node[left] at (A) {$f_\beta(\alpha)$};
    \node[right] at (B) {$\alpha$};
\end{tikzpicture}
\caption{Alternating zigzag shape of the function $f_\beta(\alpha)$ for $\beta = \frac{2}{19}$. The blue (dashed) line is the upper bound coming from the edge strategy, the red (dotted) line is the upper bound coming from the vertex strategy. As can be seen, the two strategies give rise to two alternating zigzag shapes, that together make up the single zigzag with increased frequency. We note the worst bounds occur at the intersection points of the two lines, which all lie on the green (solid) line at $\frac{1-\beta}{2}$.}
\label{fig:hyperzigzag}
\end{figure}

\subsection{Techniques}\label{sec:Methods}

To give a lower bound for the cop number we need to exhibit a strategy for the cops. As in the work of {\L}uczak and Pra{\l}at\cite{Luczak_Pralat_RobberZigZag}, we show the existence of a strategy for the cops to \emph{surround} the robber using a probabilistic argument. Whilst in \cite{Luczak_Pralat_RobberZigZag} the strategies focused solely on surrounding a small \emph{vertex}-neighbourhood of the robber, we also consider a second type of strategy which aims to surround a small \emph{edge}-neighbourhood, and utilise both these strategies in our result. 

Assuming the robber starts on a vertex $v$, after his first $r$ moves the robber has to be in the $r$-th vertex-neighbourhood $\vNhd[r]{v}$, and specifically in some edge of the $r$-th edge-neighbourhood $\eNhd[r]{v}$. The cops aim to occupy each edge in $\eNhd[r]{v}$ before the robber has had time to leave this set. Since the cops move first and a cop can catch the robber in a single move once they occupy the same edge, the cops need to occupy each edge in $\eNhd[r]{v}$ within their first $r$ moves (see \Cref{figure:heuristic:edges}). The strategy of surrounding via vertices works similarly, the only difference being that the cops surround the $r$-th vertex-neighbourhood and have $r+1$ moves before the robber can escape. The pay-off in choosing to surround via vertices or edges can be seen as follows -- in the former we can use cops at a larger distance, and so in general we will have more cops to work with, whereas in the latter, since each edge contains many vertices, we will not have to occupy as many edges as we would have vertices, and so perhaps we can catch the robber with fewer cops.

For a fixed vertex $v$ and a fixed distance $r$, the existence of such a strategy can then be reduced to a \emph{matching problem} -- for instance in the case of the edge strategy, for each edge $e$ at distance at most $r$ from $v$ we need to assign a unique cop at distance at most $r$ from $e$, whose strategy is to occupy $e$ within the first $r$ turns of the game. We aim to show (see Claims \ref{claim:injection_vertexstrat}, \ref{claim:injection_edgestrat}) that such an assignment of cops can be found with \emph{positive} probability if we choose a \emph{random} set of cops, assigning a cop to each vertex in the graph independently with some probability $q$ (see  \Cref{figure:heuristic:edges}).

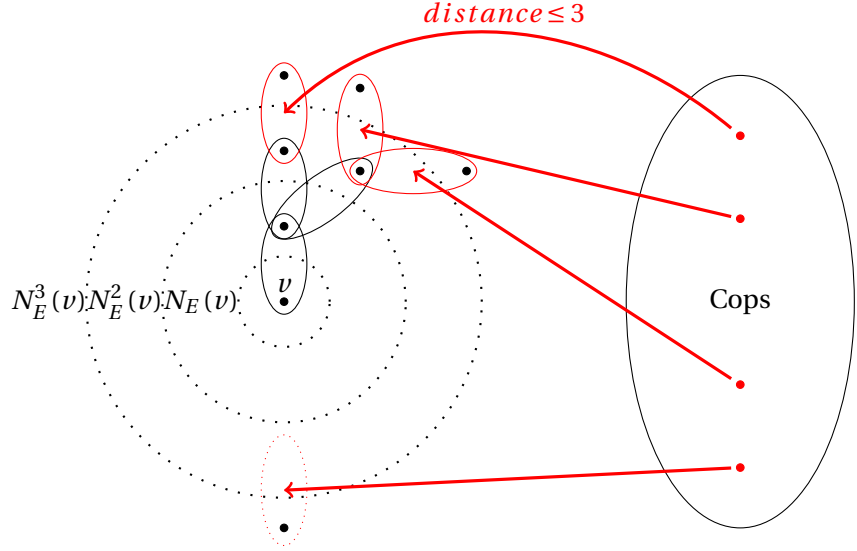
\begin{figure}[ht!]
\begin{tikzpicture}
	\def\circlerad{1};
    \def\vertexsize{0.05};
    \def\ellipse_x{(0.3)* \circlerad};
    \def\ellpise_y{(2/3)*\circlerad};
    \coordinate (startpoint) at (0,0);
    \coordinate (cops) at (6,0);
    
    \draw[fill=black]  (startpoint) circle [radius = \vertexsize] node [above]{$v$} ; %startpoint
    \draw[thick, loosely dotted] (startpoint) circle[radius = 0.6*\circlerad] node at ($(startpoint)-(1.1*\circlerad,0)$) {$\eNhd[1]{v}$}; 
    \draw[thick, loosely dotted] (startpoint) circle[radius = 1.6*\circlerad]node at ($(startpoint)-2.1*(\circlerad,0)$) {$\eNhd[2]{v}$};
    \draw[thick, loosely dotted] (startpoint) circle[radius = 2.6*\circlerad]node at ($(startpoint)-3.1*(\circlerad,0)$) {$\eNhd[3]{v}$}; 
%circles for neighbourhood
        
    \draw ($(startpoint)+(0,\circlerad/2) $)ellipse [x radius = \ellipse_x  cm, y radius = \ellpise_y cm];
    \draw[fill=black]  ($(startpoint)+(0,\circlerad) $) circle [radius = \vertexsize] node (n1) {};
%inner ellipse plus point
    \draw ($(startpoint)+(0, 1.5* \circlerad) $)ellipse [x radius = \ellipse_x  cm, y radius = \ellpise_y cm];
    \draw[fill=black]  ($(startpoint)+(0,2*\circlerad) $) circle [radius = \vertexsize]node (n2) {};
%second ellipse plus point
    \draw [red]($(startpoint)+(0, 2.5* \circlerad) $)ellipse [x radius = \ellipse_x  cm, y radius = \ellpise_y cm];
    \draw[fill=black]  ($(startpoint)+(0,3*\circlerad) $) circle [radius = \vertexsize]node (n3) {};
%third ellipse plus point
	\draw[fill=black]  ($(startpoint)-(0,3*\circlerad) $) circle [radius = \vertexsize]node (n4) {};
%bottom neighbour

    \draw (cops) ellipse [x radius = 1.5, y radius = 3] node {Cops};
%cops ellipse
    \draw [red,fill = red]($(cops)+(0,2.2)$) circle [ radius = \vertexsize] node (c1) {};
    \draw [red,fill = red]($(cops)+(0,1.1)$) circle [ radius = \vertexsize] node (c2) {};
    \draw [red,fill = red]($(cops)-(0,1.1)$) circle [ radius = \vertexsize] node (c3) {};
    \draw [red,fill = red]($(cops)-(0,2.2)$) circle [ radius = \vertexsize] node (c4) {};
    
%cops vertices
    
    \draw[fill=black]  ($(startpoint)+(60:2*\circlerad) $) circle [radius = \vertexsize] node (anchor) {};
%angeled neighbour
	\draw ($0.5*(n1)+0.5*(anchor)$) ellipse [x radius = \ellipse_x  cm, y radius = 1.2 *\ellpise_y cm, rotate = 127];
%inner angeled ellipse
	\draw[fill=black]  ($(anchor)+(0,1.1*\circlerad) $) circle [radius = \vertexsize] node (anchor2) {};
	\draw [red] ($0.5*(anchor)+0.5*(anchor2)$) ellipse [x radius = \ellipse_x  cm, y radius = 1.1 *\ellpise_y cm];
%outer point plus ellipse
	\draw[fill=black]  ($(anchor)+(1.4*\circlerad,0)$) circle [radius = \vertexsize] node 	(anchor3) {};
	\draw [red]($0.5*(anchor)+0.5*(anchor3)$) ellipse [x radius = \ellipse_x  cm, y radius = 1.25 *\ellpise_y cm, rotate = 90];
%outer angled point plus ellipse

\draw [red,dotted] ($(startpoint)-(0,2.5*\circlerad) $) ellipse [x radius = \ellipse_x  cm, y radius = 1.1 *\ellpise_y cm];
%bottom dotted ellipse

\draw[red, ->,very thick] (c1) to [out=140,in=45] node[midway,above] {$distance \leq 3$}($(startpoint)+(0, 2.5* \circlerad) $) ;
\draw[red, ->,very thick] (c2) to  ($0.5*(anchor)+0.5*(anchor2)$);
\draw[red, ->,very thick] (c3) to  ($0.5*(anchor)+0.5*(anchor3)$);
\draw[red, ->,very thick] (c4) to  ($(startpoint)-(0,2.5*\circlerad) $);
%arrows
\end{tikzpicture}
\caption{A visualisation of the edge-surrounding strategy. The cops try to cover all edges of the third edge-neighbourhood of $v$ in $3$ moves, which is possible if there is a matching between $\eNhd[3]{v}$ and all vertices occupied by cops within distance three of these edges, which covers all edges of $\eNhd[3]{v}$.}
\label{figure:heuristic:edges}
\end{figure}

Assuming that our $k$-graph $G$ is \smoothexp, we have quite good control over the sizes of $\vNhd[r]{v}$ and $\eNhd[r]{v}$, and also over the number of vertices at a fixed distance from each vertex and edge contained in these sets. Using some standard probabilistic and combinatorial tools, we can show that for an appropriate choice of $q$, with positive probability we can find an appropriate assignment of cops for \emph{each} possible starting vertex $v$, and bound the number of cops $m$ we use in such a strategy, which in general depends not only on $r$, but also on the uniformity $k$ and average degree $d$ of $G$.

This leads to a family of bounds on the cop number, one for each $r\in \mathbb{N}$, for both the vertex and edge surrounding strategy. For a fixed choice of parameters $k$ and $d$, we then have to solve an integer optimisation problem to find which choice of $r$ (and of a vertex or edge surrounding strategy) leads to the best bound on the cop number, from which we can derive the bounds in Theorem \ref{thm:main:regimes}.

\subsection{Outline of the paper}\label{subsec:Outline}
The rest of the paper is structured as follows. In Section \ref{sec:Preliminaries} we introduce some notation and important definitions and state some auxiliary results. 
In Section \ref{sec:Proofs:regime&meyniel} we prove Theorems \ref{thm:main:Meyniel} and \ref{thm:main:regimes} and in Section \ref{sec:proof:Gknp} we prove Theorem \ref{thm:main:Gknp}. We conclude in Section \ref{sec:Discussion} by discussing some unresolved questions and possible directions for future research.

\section{Preliminaries}\label{sec:Preliminaries}

All asymptotics in the paper are taken as $n$ tends to infinity. We say that a statement $A(n)$ holds \textit{with high probability} (\textit{\whp} for short) if $\lim_{n \to \infty} \prob{A(n)} = 1$. We use standard Landau notation for all asymptotics. Furthermore, we omit floors or ceilings in proofs to improve readability. Let $\N_0 = \N \cup \{0\}$ and $\N_{\geq 2} = \{2,3,\ldots\}$. Throughout the paper we assume $k \in \N_{\geq 2}$.

Let $G$ be a $k$-uniform hypergraph, or $k$-graph, on vertex set $[n]$. The \emph{distance} between two different vertices $\dist{x}{y}$ is equal to the smallest length of a loose path between them, that is, the smallest $t \in \N$ such that there exists a sequence $e_1,e_2,\ldots, e_t$ of edges such that $x \in e_1, y \in e_t$ and $e_i \cap e_{i+1} \neq \emptyset$ for all $1\leq i \leq t-1$. Furthermore, we define $\dist{x}{x} = 0$, for all $x \in V(G)$. The distance between two sets of vertices $A$ and $A'$ is defined as the minimum distance between all pairs of vertices from the two sets:
\[
\dist{A}{A'} \coloneqq \min_{u \in A} \min_{v \in A'} \dist{u}{v}.
\]
For a set of vertices $A \subseteq V(G)$ and $r \in \N_0$ we denote the \emph{closed $r$-th vertex-neighbourhood of $A$} by $\vNhd{A}$.
Formally, 
\[
\vNhd[r]{A} \coloneqq \{v\in V(G) : \dist{A}{v} \leq r\}.
\]
For a set of edges $B \subseteq E(G)$ we write 
\[
\vine{B} \coloneqq \{v \in e : e\in B\}
\]
for the set of all vertices contained in at least one of the edges in $B$. 
We extend the definition of the closed $r$-th vertex-neighbourhood to sets of edges in the obvious way by setting
\[
\vNhd[r]{B} \coloneqq \vNhd[r]{\vine{B}} = \bigcup_{v\in\vine{B}} \vNhd[r]{v}.
\]
Furthermore, for a set of vertices $A \subseteq V(G)$ and $r \in \N$, we define the \emph{closed $r$-th edge-neighbourhood of $A$} as

\[
\eNhd[r]{A} \coloneqq \left\{e \in E(G) : \dist{e}{A} \leq r-1    \right\}.
\]
Note that to be included in the $r$-th edge-neighbourhood, an edge has to be within distance $r-1$ of the respective  vertex set. The motivation behind this parameter shift is that the amount of \lq discovered\rq \ vertices is the same for the $r$-th vertex- and edge-neighbourhood, or in other words $\vNhd[r]{A} = \vine{\eNhd[r]{A}}$.
For notational convenience, we will omit the superscript for the \emph{first} vertex- and edge-neighbourhood and write $\vNhd[1]{A} \coloneqq N_V^1(A)$ and $\eNhd[1]{A} \coloneqq N_E^1(A)$, respectively.

We denote the \emph{average vertex degree} of $G$ by
\[ 
d(G) \coloneqq \frac{1}{|V(G)|} \sum_{v \in V} |\vNhd[1]{v}|,
\]
where we consider the \emph{degree} of a vertex $v$ to be $|N_V(v)|$, and not $|N_E(v)|$ which is also sometimes referred to as the degree of a vertex in a hypergraph.

Note that the average vertex degree of $\Gknp$, i.e., $d \coloneqq d\left(\Gknp\right)$, is a 
random variable that is 
concentrated around its expectation, which is given as 
\begin{equation}\label{eq:average_degree_def}
\expec{d} = (n-1)\left(1-(1-p)^{\binom{n-2}{k-2}}\right).  
\end{equation}
For appropriate ranges of $p$ and $k$ we can estimate $\expec{d}$ as follows:
\begin{equation}\label{eq:average_degree_refinement}
\expec{d} =  (n-1)\left(1-(1-p)^{\binom{n-2}{k-2}}\right) =  n p\binom{n-2}{k-2} \left(1+o(1) \right) =  p k  \binom{n-1}{k-1} \left(1+o(1) \right),
\end{equation}
where the penultimate equation holds when $p\binom{n-2}{k-2} = o(1)$ and the final equation holds for $k = o(n)$. In the context of Cops and Robber games on random hypergraphs, these are reasonable assumptions, as otherwise either each edge-neighbourhood (deterministically) or each vertex-neighbourhood (in expectation) covers a constant fraction of the vertex set. If the graph is also connected, it can be shown in both cases that the cop number is then whp at most logarithmic in $n$.
We will also restrict ourselves to the case $d\geq k \geq 2$, since for $d <k$ the hypergraph will contain isolated vertices. Since the cop number is additive over disjoint unions, it is natural to restrict our attention to connected hypergraphs.

For convenience, rather than working with $d$, we will work with the following explicit quantity 
\begin{equation}\label{eq:average_degree_hat}
\dhat = \dhat(n,p,k) \coloneqq p k  \binom{n-1}{k-1}.
\end{equation}
We will show later (see \Cref{l:vertex_expansion}) that the size of the first vertex-neighbourhood of every vertex in $\Gknp$ lies close to $\dhat$, from which it then follows, that $d$ is approximately $\dhat$. 

\begin{definition}\label{Property:neighbourhood_estimates}
Let $G$ be a $k$-graph on $n$ vertices with average vertex degree $d\geq k$. Given a positive constant $0< \smoothconst \leq 1$, which we call the \emph{expanding constant}, we say $G$ is \smoothexp if $G$ has the following properties.

\begin{enumerate}[label = \bf{(A.\arabic*)}]
    \item\label{Property:neighbourhood_estimates:eNhd} For every vertex $v \in V(G)$ and $r \in \N$ satisfying $d^r \leq \sqrt{nk}$,
    \begin{equation*}
   \left|\eNhd[r]{v}\right| \leq \frac{1}{\smoothconst} \frac{d^r}{k}.
    \end{equation*}
    
    \item\label{Property:neighbourhood_estimates:vNhd} For every subset $A \subseteq V(G)$ of vertices and $r \in \N$,
    \begin{equation*}
    \smoothconst \minp{}{|A| d^r, n}\leq \left|\vNhd[r]{A}\right| \leq \frac{1}{\smoothconst} |A| d^r.
    \end{equation*}
      
    \item\label{Property:neighbourhood_estimates:vNhd:edges} For every subset $B \subseteq E(G)$ of edges and $r \in \N$,
    \begin{equation*}
    \smoothconst \minp{}{|B| kd^r, n}\leq \left|\vNhd[r]{B}\right|.    
    \end{equation*}

\end{enumerate}    
\end{definition}

Throughout the paper we use the following corollaries of the Chernoff bounds (see for example  \cite[Theorem 2.1, Corollary 2.3]{Janson_Luczak_Rucinski_randomgraphs}): 

\begin{thm}\label{thm:Chernoff}
Let $X \sim \Bin(n,p)$. Then for any $t>0$, we have

\begin{equation}\label{thm:Chernoff:|x-Ex|}
\prob{\,|X - \expec{X}| \geq t\,} \leq 2 \exp \left( -\frac{t^2}{2\left(\expec{X}+t/3\right)}\right),
\end{equation}
and 
\begin{equation}\label{thm:Chernoff:onesided}
\prob{\,X \leq \expec{X} - t\,} \leq \exp \left(-\frac{t^2}{2 \expec{X}} \right).
\end{equation}
In particular, if\/ $a \leq 10 \expec{X}$, then
\begin{equation}\label{thm:Chernoff:10Ex}
\prob{X \leq a} \leq \exp \left(-4a \right).
\end{equation}
\end{thm}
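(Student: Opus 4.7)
The first two displayed inequalities are standard Chernoff/Bernstein-type bounds for a binomial random variable, quoted verbatim from the cited reference \cite{Janson_Luczak_Rucinski_randomgraphs}. I would not attempt an independent derivation of \eqref{thm:Chernoff:|x-Ex|} or \eqref{thm:Chernoff:onesided}; they can be treated as known. Thus the only content requiring a proof is the ``In particular'' clause \eqref{thm:Chernoff:10Ex}, which should be a short consequence of \eqref{thm:Chernoff:onesided}.

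My plan for \eqref{thm:Chernoff:10Ex} is to apply \eqref{thm:Chernoff:onesided} with the deviation parameter $t \coloneqq \expec{X} - a$, which is nonnegative provided $a$ is suitably small compared with $\expec{X}$. Since $X \leq a$ is the same as $X - \expec{X} \leq -t$, this gives
\[
\prob{X \leq a} \;=\; \prob{X - \expec{X} \leq -t} \;\leq\; \exp\!\left(-\frac{t^2}{2\expec{X}}\right),
\]
and so it suffices to verify that $\tfrac{t^2}{2\expec{X}} \geq 4a$. Under the intended hypothesis that $a$ is at most a small fraction of $\expec{X}$ (specifically $\expec{X} \geq 10 a$), one has $t \geq \tfrac{9}{10}\expec{X}$, and therefore
\[
\frac{t^2}{2\expec{X}} \;\geq\; \frac{(9/10)^2\,\expec{X}^2}{2\expec{X}} \;=\; \frac{81}{200}\,\expec{X} \;\geq\; \frac{81}{20}\, a \;\geq\; 4 a.
\]
Combining this with the previous display yields the stated bound $\prob{X \leq a} \leq \exp(-4a)$.

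There is no substantive mathematical obstacle here beyond choosing $t$ correctly and tracking constants; the only point that requires a little care is the direction of the hypothesis. A lower-tail bound of the form $\prob{X \leq a} \leq e^{-4a}$ can only be meaningful when $a$ is small relative to $\expec{X}$, so the hypothesis should be read as asserting that $\expec{X}$ is at least some constant (namely $10$) times $a$, rather than the reverse. With this reading, the one-line substitution above completes the proof.
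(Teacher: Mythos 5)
Your proof is correct and is the standard (and surely intended) derivation: the first two bounds are simply quoted from the cited reference, and \eqref{thm:Chernoff:10Ex} follows from \eqref{thm:Chernoff:onesided} with $t=\expec{X}-a$ exactly as you compute, the constant working out to $\tfrac{81}{200}\cdot 10 = 4.05 \geq 4$. You are also right to flag the hypothesis: as printed, $a\leq 10\expec{X}$ cannot be correct (e.g.\ $a=2\expec{X}$ gives $\prob{X\leq a}$ close to $1$ while $e^{-4a}$ is tiny), and every application of \eqref{thm:Chernoff:10Ex} in the paper in fact verifies $\expec{X}\geq 10a$, which is precisely what your calculation needs.
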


An important step in the proof of the main theorems is constructing matchings in specific bipartite graphs which cover one partition class. To this end we use Hall's marriage theorem, which we state here for the sake of completeness. 
\begin{thm}[\cite{Hall}, Theorem 1]\label{thm:Hall}
Let $(A\cup B,E)$ be a bipartite graph. The following two statements are equivalent.

\begin{enumerate}
    \item There is a matching covering all vertices of $A$.
    \item  $ |N_V(X)\setminus X |\geq |X|$ for all $X \subseteq A$.
\end{enumerate}
\end{thm}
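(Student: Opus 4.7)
The plan is to prove the equivalence by induction on $|A|$, with the easy direction $(1) \Rightarrow (2)$ being a direct consequence of the definitions, and the reverse direction $(2) \Rightarrow (1)$ forming the main content. For the forward direction, given a matching $M$ covering $A$ and a set $X \subseteq A$, the $M$-mates of vertices in $X$ are distinct and lie in $N_V(X)$; moreover, since the graph is bipartite with $X \subseteq A$, these mates lie outside $X$, so $|N_V(X) \setminus X| \geq |X|$.

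For the reverse direction I would induct on $|A|$. The base case $|A| = 1$ is immediate: Hall's condition applied with $X = A$ produces at least one neighbour to match the single vertex of $A$. For the inductive step I would split into two cases according to whether Hall's condition holds strictly on every proper non-empty subset of $A$.

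In the \emph{strict} case, when $|N_V(X) \setminus X| \geq |X| + 1$ for every non-empty proper subset $X \subsetneq A$, pick any vertex $a \in A$, match it to an arbitrary $b \in N_V(a)$, and delete both from the graph. For any $Y \subseteq A \setminus \{a\}$, deleting $b$ reduces $|N_V(Y) \setminus Y|$ by at most one, so Hall's condition persists with the required inequality $|Y|$, and the inductive hypothesis supplies a matching of $A \setminus \{a\}$ which, together with the edge $ab$, covers $A$.

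In the \emph{tight} case, when there exists a non-empty proper subset $X \subsetneq A$ with $|N_V(X) \setminus X| = |X|$, I would apply the inductive hypothesis twice. First, to the bipartite subgraph on $X \cup N_V(X)$, in which Hall's condition is directly inherited from $G$, obtaining a matching $M_1$ covering $X$. Second, to the bipartite subgraph on $(A \setminus X) \cup (B \setminus N_V(X))$. The main obstacle is to verify Hall's condition here: given $Y \subseteq A \setminus X$, applying Hall in the original graph to $X \cup Y$ gives $|N_V(X \cup Y) \setminus (X \cup Y)| \geq |X| + |Y|$, and since the contribution from $N_V(X)$ is exactly $|X|$, at least $|Y|$ neighbours of $Y$ must lie in $B \setminus N_V(X)$. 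The inductive hypothesis then produces a matching $M_2$ covering $A \setminus X$, and $M_1 \cup M_2$ covers $A$ as required. I expect this case analysis, and in particular the bookkeeping in the tight case, to be the only non-trivial aspect of the argument.
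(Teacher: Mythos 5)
Your proof is correct: the forward direction is the standard observation about matched partners, and the reverse direction is the classical induction with the split into the case where Hall's condition is everywhere strict and the case where a tight (critical) set $X$ exists, with the key bookkeeping step---that $|N_V(X)\setminus X|=|X|$ forces at least $|Y|$ neighbours of any $Y\subseteq A\setminus X$ to land in $B\setminus N_V(X)$---carried out correctly. Note, however, that the paper does not prove this statement at all; it is quoted as a classical result with a citation to Hall's original article, so there is no proof in the paper to compare against. What you have written is the standard Halmos--Vaughan-style argument found in textbooks, and it would serve as a complete self-contained proof if one were wanted.
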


\section{Proofs of Theorems \ref{thm:main:Meyniel} and \ref{thm:main:regimes}}\label{sec:Proofs:regime&meyniel}

We start by proving Theorem \ref{thm:main:regimes}, from which Theorem \ref{thm:main:Meyniel} follows as a direct consequence. 

\begin{proof}[Proof of Theorem \ref{thm:main:regimes}]
Let $G$ be a \smoothexp $k$-graph on $n$ vertices. Note that, as the stated bounds on the cop number are all clearly at least $20$, we can assume w.l.o.g. that $n\geq 20$. 

Our strategy is to choose the initial placement of our cops in such a way that we can catch the robber in $j$ moves for some $j \in \N$. In order to show the existence of such a choice of initial positions, we will use the expansion properties of $G$ to show that a random choice succeeds with positive probability.

In regimes (\ref{r:a}) and (\ref{r:d}) we will catch the robber by surrounding its $(j-1)$-st vertex-neigh\-bour\-hood. In fact, cops that start too far from the robber will not actively participate in the game. The following key claim characterises how many cops we need in the respective regimes to guarantee that sufficiently many cops are close enough to the starting vertex of the robber to make this strategy work. The proof of the claim is deferred to the end of this section.

\begin{claim}\label{claim:injection_vertexstrat}
Let $j \in \N$. There exists a subset $Y \subseteq V(G)$ such that for every vertex $v \in V(G)$ there exists an injection $f \colon \vNhd[j-1]{v}\to Y$ such that for every vertex $x \in \vNhd[j-1]{v}$, we have $\dist{x}{f(x)} \leq j$. Furthermore,
\begin{enumerate}
    \item[{\crtcrossreflabel{(a)}[i:a]}] if\/ $j \neq 1$ and $n^{\frac{1}{2j-1}} \leq d \leq \left(\frac{n}{k}\right)^{\frac{1}{2j-2}}$, then $|Y| \leq 20 \smoothconst^{-2}d^{j-1} \left\lceil \frac{n}{d^{2j-1}} \log n  \right\rceil$;
    \item[{\crtcrossreflabel{(d)}[i:d]}] if\/ $(nk)^{\frac{1}{2j}} \leq d \leq n^{\frac{1}{2j-1}}$, then $|Y| \leq 20 \smoothconst^{-1}\frac{n}{d^j} \log n$.
\end{enumerate}
\end{claim}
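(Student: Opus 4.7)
The plan is to construct $Y$ probabilistically: include each vertex of $V(G)$ in $Y$ independently with some probability $q$, and show that with positive probability the resulting random set simultaneously satisfies the claimed upper bound on $|Y|$ and the Hall-type condition that produces the required injection for \emph{every} starting vertex $v \in V(G)$. For fixed $v$, consider the bipartite graph with left part $\vNhd[j-1]{v}$ and right part $V(G)$, with $x$ on the left adjacent to $y$ on the right iff $\dist{x}{y} \leq j$; by \Cref{thm:Hall}, an injection $f \colon \vNhd[j-1]{v} \to Y$ with $\dist{x}{f(x)} \leq j$ for all $x$ exists precisely when $|Y \cap \vNhd[j]{A}| \geq |A|$ for every $A \subseteq \vNhd[j-1]{v}$, so the goal reduces to ensuring this inequality uniformly in $v$ and $A$. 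I would take $q = \frac{C d^{j-1}}{\smoothconst^{2} n} \left\lceil \frac{n \log n}{d^{2j-1}}\right\rceil$ in case \ref{i:a} and $q = \frac{C \log n}{\smoothconst\, d^{j}}$ in case \ref{i:d}, for a sufficiently large absolute constant $C$; these are calibrated so that $qn$ matches, up to a constant factor, the target upper bound on $|Y|$, and a Chernoff bound applied to $|Y| \sim \Bin(n,q)$ via \eqref{thm:Chernoff:|x-Ex|} gives $|Y| \leq 2qn$ whp, which is absorbed into the factor $20$ in the claim.

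The core step is verifying Hall's condition. I would split into two sub-cases based on whether $|A|\, d^j \leq n$ or $|A|\, d^j > n$, corresponding to which side of the minimum in Property \ref{Property:neighbourhood_estimates:vNhd} lower-bounds $|\vNhd[j]{A}|$. In the small-$A$ sub-case one has $\expec{|Y \cap \vNhd[j]{A}|} \geq q\smoothconst |A|\, d^j \geq (C/\smoothconst)|A|\log n$, so the one-sided bound \eqref{thm:Chernoff:onesided} yields $\prob{|Y \cap \vNhd[j]{A}| < |A|} \leq n^{-\Omega(C|A|/\smoothconst)}$; a union bound over the at most $n \cdot \binom{n}{|A|} \leq n^{|A|+1}$ pairs $(v,A)$ of this shape closes the sub-case once $C$ is large enough. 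In the large-$A$ sub-case $|A| \leq |\vNhd[j-1]{v}| \leq \smoothconst^{-1} d^{j-1}$ and $|\vNhd[j]{A}| \geq \smoothconst n$, so it suffices to check that the chosen $q$ satisfies $q\smoothconst n \geq 2|A|$ with slack enough to beat the $2^{|\vNhd[j-1]{v}|}$ subsets $A$ per vertex in the union bound; I would verify this directly in each regime.

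I expect the main obstacle to be managing the two further sub-cases inside regime \ref{i:a}: when $n \leq d^{2j-1} \leq n \log n$ the ceiling equals $\lceil n\log n/d^{2j-1}\rceil \sim n\log n/d^{2j-1}$ and the small-$A$ condition is what drives the bound on $q$, paralleling case \ref{i:d}; when $d^{2j-1} > n \log n$ the ceiling equals $1$ and the large-$A$ condition is what forces the extra factor of $\smoothconst^{-1}$, producing the $\smoothconst^{-2}$ out front. One has to check that the single $q$ above simultaneously handles both Hall conditions in both sub-cases, and, in case \ref{i:d}, that the inequality $d^{2j-1} \leq n$ lets a single $\log n$ factor in $q$ absorb one power of $\smoothconst^{-1}$, explaining the $\smoothconst^{-1}$ versus $\smoothconst^{-2}$ discrepancy between the two items of the claim.
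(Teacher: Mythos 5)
Your proposal is correct and follows essentially the same route as the paper's proof: a random set $Y$ with inclusion probability $q$ (matching the paper's choices $q=10\smoothconst^{-2}\frac{d^{j-1}}{n}\bigl\lceil\frac{n}{d^{2j-1}}\log n\bigr\rceil$ and $q=10\smoothconst^{-1}d^{-j}\log n$ up to the constant), the reduction to Hall's condition in the auxiliary bipartite graph, the case split on whether $|A|d^j\leq n$, and union bounds over $A$ and $v$; your accounting of where the extra $\smoothconst^{-1}$ in case \ref{i:a} comes from also matches the paper's argument. The only step you defer, beating the $2^{\left|\vNhd[j-1]{v}\right|}$ term in the large-$A$ union bound for case \ref{i:a}, is handled in the paper via $d^{j-1}\geq n^{\frac{j-1}{2j-1}}\geq n^{1/3}$ (using $j\geq 2$), exactly the direct verification you anticipate.
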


Given $Y$ as in the claim, the cops' strategy is to initially occupy the vertices of $Y$. The robber starts on some vertex $v$. 
By Claim \ref{claim:injection_vertexstrat} there exists an injection $f$ such that for every \emph{vertex} $x \in \vNhd[j-1]{v}$, the cop on vertex $f(x)$ is within distance $j$ of $x$ at the start of the game. Each cop which starts on a vertex $w$ in the image $f\left( \vNhd[j-1]{v}\right)$ moves to the vertex $f^{-1}(w)$ in the first $j$ moves. After $j-1$ moves the robber is positioned at some vertex $w \in \vNhd[j-1]{v}$. Since the cops move first, in the next turn the cop that started on the vertex $f(w)$ moves to the vertex $w$ and catches the robber. Note that in regime (\ref{r:a}), we use the described strategy after applying the index shift $j \to j+1$ to obtain the desired result.

In regimes (\ref{r:b}) and (\ref{r:c}) we instead catch the robber by surrounding his $j$-th edge-neighbourhood. Similar to the previous case, it suffices to prove the following claim, which we will do at the end of this section. 

\begin{claim}\label{claim:injection_edgestrat}
Let $j \in \N$. There exists a subset $Z \subseteq V(G)$ such that for all vertices $v \in V(G)$ there exists an injection $g \colon \eNhd[j]{v}\to Z$ , such that for every edge $e \in \eNhd[j]{v}$, we have $\dist{e}{g(e)} \leq j$. Furthermore,
\begin{enumerate}
    \item[{\crtcrossreflabel{(b)}[i:b]}] if\/ $\left(\frac{n}{k}\right)^{\frac{1}{2j}}\leq d \leq n^{\frac{1}{2j}}$, then $|Z| \leq 20 \smoothconst^{-1} \frac{n}{kd^j} \log n$;
    \item[{\crtcrossreflabel{(c)}[i:c]}] if\/ $n^{\frac{1}{2j}} \leq d \leq (nk)^{\frac{1}{2j}} $, then $|Z| \leq 20 \smoothconst^{-2}\frac{d^j}{k} \left\lceil \frac{n}{d^{2j}} \log n  \right\rceil \left\lceil \frac{k}{d^{j}} \log n  \right\rceil$.
\end{enumerate}
\end{claim}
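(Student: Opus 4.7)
The plan is to adapt the probabilistic approach from the proof of \Cref{claim:injection_vertexstrat} to the edge-surrounding setting. For a parameter $q\in(0,1)$ to be chosen based on the regime, sample $Z\subseteq V(G)$ by including each vertex independently with probability $q$. The goal is to show that with positive probability $Z$ satisfies both (i)~$|Z|$ meets the claimed upper bound and (ii)~for every starting vertex $v$ there exists an injection $g\colon \eNhd[j]{v}\to Z$ with $\dist{e}{g(e)}\leq j$ for every $e\in \eNhd[j]{v}$.

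For a fixed $v$, the existence of $g$ follows, via Hall's marriage theorem (\Cref{thm:Hall}) applied to the bipartite graph between $\eNhd[j]{v}$ and $Z$ in which $e$ is adjacent to $z\in Z$ iff $\dist{e}{z}\leq j$, from the condition $|\vNhd[j]{X}\cap Z|\geq |X|$ for every $X\subseteq \eNhd[j]{v}$. Since $|\vNhd[j]{X}\cap Z|\sim \Bin(|\vNhd[j]{X}|,q)$, property \ref{Property:neighbourhood_estimates:vNhd:edges} gives its expectation $\mu\geq \smoothconst q\min\{|X|kd^j,\,n\}$, and the one-sided Chernoff bound (\Cref{thm:Chernoff:onesided}) yields a tail estimate $\exp(-\mu/8)$ whenever $\mu\geq 2|X|$. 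The number of pairs $(v,X)$ over which we need to union bound is controlled using $|\eNhd[j]{v}|\leq d^j/(\smoothconst k)$ from property \ref{Property:neighbourhood_estimates:eNhd}, which is available because $d^j\leq\sqrt{nk}$ throughout both regimes \ref{i:b} and \ref{i:c}.

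In regime \ref{i:b}, take $q$ of order $\smoothconst^{-1}\log n/(kd^j)$, so that $qn$ is, up to a constant, the claimed bound on $|Z|$. In the unsaturated case $|X|kd^j\leq n$ one has $\mu = \Theta(|X|\log n)$, giving a Chernoff tail of $n^{-\Omega(|X|)}$ that easily absorbs the $\binom{|\eNhd[j]{v}|}{|X|}$ choices of $X$. In the saturated case, the bounds $d^j\leq \sqrt{n}$ (since $d\leq n^{1/(2j)}$) and $k\leq d$ (from the connectedness assumption $d\geq k$) keep $\mu\geq q\smoothconst n$ large enough to dominate even the crude count $n\cdot 2^{|\eNhd[j]{v}|}$. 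In regime \ref{i:c}, take $q$ of order $\smoothconst^{-2}d^j\alpha_1\alpha_2/(nk)$, where $\alpha_1=\lceil n\log n/d^{2j}\rceil$ and $\alpha_2=\lceil k\log n/d^j\rceil$. Here $\alpha_1$ is used to guarantee $\mu\gtrsim |X|\log n$ in the unsaturated case (in both subcases $\alpha_1=1$ and $\alpha_1>1$, using $d^{2j}\geq n$), while $\alpha_2$ is needed in the saturated case, where $|\eNhd[j]{v}|$ can be substantially larger than $\log n$ and the count $2^{|\eNhd[j]{v}|}$ must be overcome, since $d^j\alpha_2/k\geq \log n$ by construction of $\alpha_2$. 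A final Markov step on $|Z|$ then shows $|Z|\leq 2qn$ with probability at least $1/2$, and combining this with the Hall-failure union bound gives a $Z$ satisfying both conclusions.

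The main obstacle is the careful bookkeeping in regime \ref{i:c}: one must verify, uniformly over $s=|X|$ and over the relative positions of $d^j$ with respect to the thresholds $\sqrt{n\log n}$, $\sqrt{nk}$ and $k\log n$, that $\mu/8$ exceeds $\log n+s\log(e|\eNhd[j]{v}|/s)$. The appearance of the \emph{product} $\alpha_1\alpha_2$ in the claim (rather than a single factor such as $\max\{\alpha_1,\alpha_2\}$) is precisely what is required to handle the worst combination of the small-$|X|$ and large-$|X|$ subranges simultaneously, since one factor ensures sufficient $\mu$ per unit of $|X|$ and the other compensates for the $2^{|\eNhd[j]{v}|}$ overhead in the saturated regime.
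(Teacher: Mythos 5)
Your proposal is correct and follows essentially the same route as the paper's proof: a random vertex set $Z$ with the same choices of $q$ in each regime, reduction to Hall's condition on the bipartite distance-$\leq j$ graph, the case split at $|X|kd^j \lessgtr n$ using \ref{Property:neighbourhood_estimates:vNhd:edges} and \ref{Property:neighbourhood_estimates:eNhd}, and Chernoff plus union bounds over subsets and starting vertices. Your reading of the roles of the two ceiling factors (one ensuring $\mu\gtrsim|X|\log n$ in the unsaturated case, the other overcoming $2^{|\eNhd[j]{v}|}$ and the union over $v$ in the saturated case) matches the paper's intent — indeed your $\alpha_2=\left\lceil \frac{k}{d^j}\log n\right\rceil$ is consistent with the claim's statement, whereas the paper's proof text repeats $\left\lceil \frac{n}{d^{2j}}\log n\right\rceil$ in its definition of $q$, which appears to be a typo.
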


Indeed, given such a set $Z$ the cops' strategy is to initially occupy the vertices of $Z$. The robber starts on some vertex $v$. By Claim \ref{claim:injection_edgestrat} there exists an injection $g$ such that for every \emph{edge} $e \in \eNhd[j]{v}$, we have $\dist{e}{g(e)} \leq j$. Each cop which starts on a vertex $w$ in the image $g\left(\eNhd[j]{v}\right)$ moves to some vertex $u$ in the edge $g^{-1}(w)$ in the first $j$ moves. After $j$ moves, the robber is on some vertex $x$ in some edge $e \in \eNhd[j]{v}$. Since the cops move first, the cop that started at $g(e)$ is currently at some vertex $u \in e$, and hence can catch the robber in the next move.
\end{proof}

It remains to prove the two claims.
\begin{proof}[Proof of Claim \ref{claim:injection_vertexstrat}]
Clearly such a set $Y$ exists if we do not make any restrictions on its size, so we may assume that one of \ref{i:a} or \ref{i:d} holds. We will choose our set $Y$ by specifying some probability $q$ and letting each vertex lie in $Y$ independently with probability $q$. We show that with positive probability a random choice of $Y$ satisfies the conclusions of the claim, and hence there must exist some suitable set $Y$.

Let us start with case \ref{i:d}, so we are assuming that $(nk)^{\frac{1}{2j}} \leq d \leq n^{\frac{1}{2j-1}}$. In this case we set $q = 10 \smoothconst^{-1} d^{-j} \log n$, where we note that our assumptions on $d$ ensure that $q \leq 1$. Since $|Y| \sim \text{Bin}(n,q)$, it follows from the Chernoff bound \eqref{thm:Chernoff:|x-Ex|} that $|Y| \leq 20  \frac{n}{\smoothconst d^{j}} \log n$ with probability at least $\frac{2}{3}$. We will show that an injection as stated in the claim exists with probability at least $\frac{2}{3}$, and hence $Y$ satisfies the conclusion of the claim with probability at least $\frac{1}{3} >0$.

Given a fixed vertex $v \in V$, an injection of the desired form corresponds to a matching in the bipartite graph $H=\left(\vNhd[j-1]{v} \cup Y, E\right)$ with edge set $E = \left\{(a,b) \colon a\in \vNhd[j-1]{v}, b \in Y, \dist{a}{b} \leq j\right\}$, which covers all vertices of $\vNhd[j-1]{v}$. 
 To find a matching as described above, it is enough to check that Hall's condition (see Theorem \ref{thm:Hall}) is satisfied for all $A \subseteq \vNhd[j-1]{v}$.

We split into two cases. First, suppose that $A \subseteq \vNhd[j-1]{v}$ with $a:=|A| \leq \frac{n}{d^j}$. Slightly abusing notation, we write $N_H(A)$ for the vertices at \emph{exactly} distance $1$ from $A$ in the auxiliary graph $H$. By Property \ref{Property:neighbourhood_estimates:vNhd} it follows that $|\vNhd[j]{A}| \geq \smoothconst ad^{j}$. Thus, by our choice of $Y$, we have $|N_H(A)|=|Y \cap \vNhd[j]{A}|$ stochastically dominates a binomial random variable $\Bin \left(\smoothconst ad^{j},q \right)$, the expectation of which satisfies $\expec{\Bin \left( \smoothconst ad^{j},q \right)}= \smoothconst ad^{j }q \geq 10 a \log n $.
Therefore, by the Chernoff bound \eqref{thm:Chernoff:10Ex} it follows that
\[
\prob{\,|N_H(A)|< |A|\,} \leq \prob{\Bin\left(\smoothconst ad^{j},q\right) < a} \leq \prob{\Bin\left(\smoothconst ad^{j},q\right) \leq a \log n} \leq \exp(-4 a \log n) = n^{-4a}.
\]
Then, using a union bound over all sets $A \subseteq \vNhd[j-1]{v}$ with $|A| \leq \frac{n}{d^j}$ we can bound the probability that there exists such a  set $A$ that violates Hall's condition from above by 
\begin{equation}\label{e:sumsmall}
\sum_{a =1}^{\frac{n}{d^j}} \binom{\left|\vNhd[j-1]{v}\right|}{a} n^{-4a} \leq \sum_{a =1}^{\frac{n}{d^j}} \binom{n}{a} n^{-4a} \leq \sum_{a = 1}^n n^{-3a} \leq \frac{1}{6n}.
\end{equation}

In the second case, when $a:=|A| > \frac{n}{d^j}$, we note that since $A \subseteq \vNhd[j-1]{v}$, by Property \ref{Property:neighbourhood_estimates:vNhd} we have $a \leq \left|\vNhd[j-1]{v}\right| \leq\smoothconst^{-1} d^{j-1}$ and $|\vNhd[j]{A}| \geq \smoothconst n$. Hence, $|N_H(A)|=|Y \cap \vNhd[j]{A}|$ stochastically dominates a binomial random variable $\Bin \left(\smoothconst n,q \right)$, the expectation of which satisfies 
\[
\expec{\Bin \left(\smoothconst n,q \right)}= \smoothconst n q = \frac{10n}{\smoothconst d^{j}} \log n \geq  10 \smoothconst^{-1} d^{j-1} \log n.
\]
Here we used the fact that in this regime we have $d \leq n^\frac{1}{2j-1}$. From the Chernoff bound \eqref{thm:Chernoff:10Ex} it follows that
\[
\prob{\,|N_H(A)|< |A|\,} \leq \prob{\Bin \left( \smoothconst n,q \right) <a } \leq \prob{\Bin \left(\smoothconst n,q \right) \leq \smoothconst^{-1} d^{j-1} \log n} \leq \exp(-4 d^{j-1} \log n) = n^{-4d^{j-1}}.
\]

Again, using the union bound and the fact that $a \leq \smoothconst^{-1} d^{j-1}$ we can bound the probability that there is such a set $A$ violating Hall's condition from above by
\begin{equation}\label{e:sumlarge}
 \sum_{a = \frac{n}{d^j}  +1}^{\smoothconst^{-1} d^{j-1}} \binom{\left|\vNhd[j-1]{v}\right|}{a} n^{-4d^{j-1}} \leq 2^{\left|\vNhd[j-1]{v}\right|} n^{-4d^{j-1}} \leq 2^{-\smoothconst^{-1} d^{j-1}} n^{-4d^{j-1}}\leq \frac{1}{6n}.
\end{equation}

Thus, for every vertex $v$ an injection of the desired form exists with probability at least $1-\frac{1}{3n}$. Using another union bound over all vertices, we can bound the probability that there exists a vertex for which there is no such injection by $\frac{1}{3}$, concluding the proof in the case \ref{i:d}.

In the case \ref{i:a}, where $n^{\frac{1}{2j-1}} \leq d \leq \left(\frac{n}{k}\right)^{\frac{1}{2j-2}}$, we proceed similarly as in case \ref{i:d}, but with a slightly different value of $q$. We let $q = 10 \smoothconst^{-2} \frac{d^{j-1}}{n} \left\lceil \frac{n}{d^{2j-1}} \log n \right\rceil$, noting again that our assumptions on $d$ ensure that $q \leq 1$.

Arguing as before, splitting into cases according to whether or not $|A| > \frac{n}{d^j}$, we see that it suffices to prove the following two inequalities:
\begin{equation}\label{Proof:regimeRefinement:firstsum}
\sum_{a = 1}^{\frac{n}{d^j}} \binom{\left|\vNhd[j-1]{v}\right|}{a} \,\prob{\Bin\left(\smoothconst ad^{j},q\right) < a } \leq \frac{1}{6n},  
\end{equation}
 and 
\begin{equation}\label{Proof:regimeRefinement:secondsum}
\sum_{a =\frac{n}{d^j}+ 1}^{\smoothconst^{-1} d^{j-1}} \binom{\left|\vNhd[j-1]{v}\right|}{a} \,\prob{\Bin \left(\smoothconst n,q \right) <  a } \leq \frac{1}{6n}.
\end{equation}  
To show \eqref{Proof:regimeRefinement:firstsum}, we note that 
\[
\expec{\Bin\left(\smoothconst ad^{j},q\right) } = \smoothconst ad^{j}q \geq 10 a \frac{d^{2j-1}}{n} \left\lceil \frac{n}{d^{2j-1}} \log n \right\rceil \geq 10 a \log n.
\]Then, as before, it is clear that \eqref{thm:Chernoff:10Ex} yields the desired concentration to bound the sum as in \eqref{e:sumsmall}.

Similarly, to show \eqref{Proof:regimeRefinement:secondsum}, we first note that, since $\left\lceil \frac{n}{d^{2j-1}} \log n \right\rceil \geq 1$, it follows that
$\expec{\Bin \left(\smoothconst n,q \right)} = \smoothconst n q \geq 10 \smoothconst^{-1} d^{j-1}$. Hence, by \eqref{thm:Chernoff:10Ex}
\[
\prob{\Bin \left(\smoothconst n,q \right) > \smoothconst^{-1} d^{j-1} } \leq e^{ -4 \smoothconst^{-1} d^{j-1} },
\]
and since $a \leq \smoothconst^{-1}d^{j-1}$ by Property \ref{Property:neighbourhood_estimates:vNhd}, we can bound the sum as in \eqref{e:sumlarge}, although we have to be more careful in our estimates. 
Plugging into \eqref{Proof:regimeRefinement:secondsum} we obtain
\begin{align}
\sum_{a =\frac{n}{d^j}+ 1}^{\smoothconst^{-1} d^{j-1}} \binom{\left|\vNhd[j-1]{v}\right|}{a} e^{ -4 \smoothconst^{-1} d^{j-1} } &\leq 2^{ \smoothconst^{-1} d^{j-1}} e^{-4 \smoothconst^{-1} d^{j-1}} \label{Proof:regimeRefinement:thirdsum}  \\ 
&\leq e^{-3 \smoothconst^{-1} d^{j-1}} \leq e^{-3 n^\frac{1}{3}} \leq \frac{1}{6n}, \notag
\end{align}where we used the facts that $d^{j-1} \geq n^\frac{j-1}{2j-1} \geq n^\frac{1}{3}$, as $j \geq 2$, and that $n \geq 20$. 
\end{proof}

\begin{proof}[Proof of Claim \ref{claim:injection_edgestrat}]
As in the previous claim, the existence of such a set $Y$ is clear if we make no assumptions on its size, so we may assume that one of \ref{i:b} or \ref{i:c} holds. Again, we will choose the set $Y$ by letting each vertex lie in $Y$ independently with some fixed probability $q$, and show that with positive probability such a set $Y$ satisfies the conclusions of the claim. 

Let us start with case \ref{i:c}, where $n^\frac{1}{2j} \leq d \leq (nk)^\frac{1}{2j}$. Here we set $q=10\smoothconst^{-2}  \frac{d^{j}}{ n k}  \left\lceil \frac{n}{d^{2j}} \log n  \right\rceil \left\lceil \frac{n}{d^{2j}} \log n  \right\rceil$. As in the previous claim, it follows from \eqref{thm:Chernoff:|x-Ex|} that $|Y| \leq 20  \frac{d^{j}}{\smoothconst^{2}k} \left\lceil \frac{n}{d^{2j}} \log n  \right\rceil \left\lceil \frac{n}{d^{2j}} \log n  \right\rceil$ with probability at least $\frac{2}{3}$. We will show that an injection as stated in the claim exists with probability at least $\frac{2}{3}$, and hence $Y$ satisfies the conclusion of the claim with probability at least $\frac{1}{3} >0$.

Given a fixed $v \in V$, an injection of the desired form corresponds to a matching in the bipartite graph $H= \left(\eNhd[j]{v} \cup Y,F\right)$ with $F = \left\{(e,b) \colon e\in \eNhd[j]{v}, b \in Y, \dist{e}{b} \leq j\right\}$, which covers all of $\eNhd[j]{v}$. To find such a matching, it is enough to check that Hall's condition (see Theorem \ref{thm:Hall}) is satisfied for all $B \subseteq \eNhd[j]{v}$. We again split into two cases, depending on the size of $B$.

First, suppose that $b:=|B| \leq \frac{n}{kd^j}$. Then, by Property \ref{Property:neighbourhood_estimates:vNhd:edges} it follows that $\left|\vNhd[j]{B}\right| \geq \smoothconst b k d^{j}$ and so $|N_H(B)| = \left|Y \cap \vNhd[j]{B}\right|$ stochastically dominates a binomial random variable $\Bin \left(\smoothconst b k d^{j},q \right)$ with expectation 
\[
\expec{\Bin \left(\smoothconst b k d^{j},q \right)}=\smoothconst b k d^{j} q \geq 10 b \frac{d^{2j}}{n} \left\lceil \frac{n}{d^{2j}} \log n  \right\rceil \geq  10 b \log n.
\]

Similarly to the previous case, using the Chernoff bound \eqref{thm:Chernoff:10Ex} we can bound the probability that Hall's condition fails for such a set $B$ from above by
\[
\prob{|N_H(B)| < b} \leq \prob{\Bin \left(\smoothconst b k d^{j},q \right) < b} \leq n^{-4b}.
\]
Taking a union bound over all sets $B\subseteq \eNhd[j]{v}$ with $|B| \leq \frac{n}{kd^j}$ as in \eqref{e:sumsmall}, we see that the probability that any such set violates Hall's condition is at most $ \frac{1}{6n}$. 

In the second case when $b:=|B|> \frac{n}{kd^j}$, we note that as $B \subseteq \eNhd[j]{v}$, by Property \ref{Property:neighbourhood_estimates:eNhd} we have $b \leq \frac{d^j}{\smoothconst k}$ and by Property \ref{Property:neighbourhood_estimates:vNhd:edges} we have $|\vNhd[j]{B}| \geq \smoothconst n$. Hence, $|N_H(B)|=\left|Y \cap \vNhd[j]{B}\right|$ stochastically dominates a binomial random variable $\Bin \left(\smoothconst n,q \right)$ with expectation $\expec{\Bin \left(\smoothconst n,q \right)}= \smoothconst n q \geq 10 \frac{d^j}{\smoothconst k}$. Hence, from \eqref{thm:Chernoff:10Ex} it follows that
\[
\prob{\Bin \left( \smoothconst n,q \right) <b } \leq \prob{\Bin \left(\smoothconst n,q \right) \leq  \frac{d^j}{\smoothconst k}} \leq e^{-4  d^{j}/(\smoothconst k) }.
\]

Again, taking a union bound over all sets $B$ of size $\frac{n}{kd^j} < |B| \leq \frac{d^j}{\smoothconst k}$ we can bound from above the probability that Hall's condition fails for some $B$ with $|B| k d^j > n$ from above as in \eqref{Proof:regimeRefinement:thirdsum}. Note that in the case $j =1$ and $k \geq \frac{\dhat}{\log n}$ we need the additional factor $\left\lceil \frac{n}{d^{2j}} \log n  \right\rceil$ for this union bound to work.  

Hence, for every vertex $v$, an injection of the desired form exists with probability at least $1-\frac{1}{3n}$. Using another union bound over all vertices, we can bound the probability that there exists a vertex for which there is no such injection by $\frac{1}{3}$, concluding the proof in the case \ref{i:c}.

The proof in the case \ref{i:b} is again analogous, using $q= \frac{10}{\smoothconst kd^j} \log n$. Since the calculations are similar in nature to cases \ref{i:a}, \ref{i:c} and \ref{i:d} we omit them.
\end{proof}
Theorem \ref{thm:main:Meyniel} then follows by checking that in each regime the bounds given by Theorem \ref{thm:main:regimes} are not significantly larger than $\sqrt{\frac{n}{k}}$.

\begin{proof}[Proof of Theorem \ref{thm:main:Meyniel}]
Let $j \in \N$ be such that the average degree $d:=d(G)$ of $G$ falls into one of the regimes defined in Theorem \ref{thm:main:regimes}. We note that the upper bound in regimes (\ref{r:a}) and (\ref{r:c}) is increasing in $d$ and the upper bound in regimes (\ref{r:b}) and (\ref{r:d}) is decreasing in $d$. Hence, the bound obtained by applying Theorem \ref{thm:main:regimes} to $G$ is at least as good as the bound given at the beginning of regime (\ref{r:b}), where $d = \left(\frac{n}{k}\right)^\frac{1}{2j}$, or at the beginning of regime (\ref{r:d}), where $d = (nk)^{\frac{1}{2j}}$.

In the first case, the cop number is bounded from above by 
\[
\cop{G} \leq  20 \smoothconst^{-2} \frac{n}{k d^j}\log n = 20 \smoothconst^{-2} \frac{n}{k }\sqrt{\frac{k}{n}}\log n= 20 \smoothconst^{-2} \sqrt{\frac{n}{k}} \log n,
\]
and in the second case by 
\[
\cop{G} \leq 20 \smoothconst^{-2} \frac{n}{d^j}\log n= 20 \smoothconst^{-2} \frac{n}{\sqrt{nk}}\log n = 20 \smoothconst^{-2} \sqrt{\frac{n}{k}} \log n.
\]
\end{proof}

\section{Proof of Theorem \ref{thm:main:Gknp}}\label{sec:proof:Gknp}

To show the existence of a $\smoothconst \in (0,1]$ such that whp $\Gknp$ is \smoothexp we will proceed as follows: First, we will show that $\Gknp$ expands very well (in terms of the expansion constant) in the first edge-neighbourhood and in the first vertex-neighbourhood. Then we will inductively use these results to extend the expansion properties to larger distance neighbourhoods. As might be expected, the expansion constant remains quite good until the neighbourhoods come close to containing the whole graph. We begin by showing that the first edge-neighbourhoods of subsets of $\Gknp$ expand well. In fact, for our inductive step it will be necessary to show a stronger property that unless a subset $A$ is too large, the size of its first edge-neighbourhood will be quite tightly concentrated around the value $\frac{\hat{d}}{k} |A|$, where we recall from \eqref{eq:average_degree_hat} that $\dhat = pk\binom{n-1}{k-1}$. In order to get good estimates in larger neighbourhoods, we need to calculate the deviation in the first neighbourhood quite precisely. To this end, we set 
\begin{align*}
    \estimerror \coloneqq \frac{\sqrt{\log \log n}}{\log n}.
\end{align*}
\begin{lem}\label{lemma:edge_neighbourhood_estimate}
Assume the parameters of $G= \Gknp$ are such that $\frac{\dhat}{k} = \omega \left(\log^3 n \right)$ and $k \leq \frac{n}{4}$.  
Then whp the following holds:

For every subset $A \subseteq [n]$ satisfying $|A|  \leq \frac{2 n}{k \log n}$, 
\begin{equation}\label{eq:edge_neighbourhood_estimate:small}
 \left(1-\estimerror\right)|A|\frac{\dhat}{k} \leq |\eNhd[1]{A}| \leq  \left(1+\estimerror \right)|A|\frac{\dhat}{k}.
\end{equation}

Moreover, for every subset $A \subseteq [n]$,
\begin{equation}\label{eq:edge_neighbourhood_estimate:large}
|\eNhd[1]{A}| \geq \frac{1}{16} \minp{}{|A|\frac{\dhat}{k},\frac{n}{k}}.   
\end{equation}
\end{lem}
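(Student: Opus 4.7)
The plan is to view $X_A := |\eNhd[1]{A}|$ as a binomial random variable, apply the Chernoff bound \eqref{thm:Chernoff:|x-Ex|} for each fixed $A$, and then take a union bound over all subsets $A$ of moderate size; the second assertion will be deduced from the first by monotonicity.

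First, I would fix $A \subseteq [n]$ of size $a$ and observe that $X_A \sim \Bin(m_a, p)$, where
$m_a = \binom{n}{k} - \binom{n-a}{k}$ is the number of $k$-subsets of $[n]$ meeting $A$. Telescoping with Pascal's identity gives $m_a = \sum_{i=0}^{a-1}\binom{n-1-i}{k-1}$. The deterministic step is to show that for $a \leq 2n/(k\log n)$ and $k \leq n/4$, each term satisfies $\binom{n-1-i}{k-1} \geq (1 - O(1/\log n))\binom{n-1}{k-1}$: this follows from the product expansion $\binom{n-a}{k-1}/\binom{n-1}{k-1} = \prod_{j=0}^{k-2}\bigl(1 - (a-1)/(n-1-j)\bigr) \geq \bigl(1 - 8/(3k\log n)\bigr)^{k-1}$. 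Hence $\expec{X_A} = (1 + O(1/\log n))\, a\dhat/k$. Since $\estimerror = \sqrt{\log\log n}/\log n \gg 1/\log n$, this deterministic deviation is negligible compared to the slack allowed by \eqref{eq:edge_neighbourhood_estimate:small}.

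Next, I would apply \eqref{thm:Chernoff:|x-Ex|} with $t = \tfrac{\estimerror}{2}\, a\dhat/k$. Combining $\expec{X_A} = \Theta(a\dhat/k)$ with the hypothesis $\dhat/k = \omega(\log^3 n)$, the exponent satisfies
\[
\frac{t^2}{2(\expec{X_A}+t/3)} = \Omega\!\left(\estimerror^2 \cdot a\dhat/k\right) = \omega(a\log n\, \log\log n).
\]
A union bound over $a \in \{1,\ldots,\lfloor 2n/(k\log n)\rfloor\}$ and all $A$ of size $a$, using $\binom{n}{a} \leq (en/a)^a \leq \exp(2a\log n)$, then bounds the failure probability of \eqref{eq:edge_neighbourhood_estimate:small} by $\sum_a 2\exp(2a\log n - \omega(a\log n \log\log n)) = o(1)$. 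The two-sided concentration together with $\expec{X_A} = (1+o(\estimerror))a\dhat/k$ yields both sides of \eqref{eq:edge_neighbourhood_estimate:small} whp simultaneously for every small $A$.

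Finally, for \eqref{eq:edge_neighbourhood_estimate:large} I would exploit the monotonicity $\eNhd[1]{A'} \subseteq \eNhd[1]{A}$ whenever $A' \subseteq A$. If $|A| \leq 2n/(k\log n)$, then \eqref{eq:edge_neighbourhood_estimate:small} already gives $|\eNhd[1]{A}| \geq (1-\estimerror)|A|\dhat/k \geq \tfrac{1}{16}|A|\dhat/k$. Otherwise, pick any $A' \subseteq A$ of size $\lceil n/(k\log n)\rceil$, apply \eqref{eq:edge_neighbourhood_estimate:small} to $A'$, and note that $(1-\estimerror)\frac{n}{k\log n}\cdot\frac{\dhat}{k} \geq \frac{n}{16k}$ because $\dhat/k = \omega(\log^3 n) \gg \log n$. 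The main obstacle is the simultaneous tuning of three sources of error: the deterministic gap $\expec{X_A} - a\dhat/k$ of order $a\dhat/(k\log n)$ must stay well below $\estimerror \cdot a\dhat/k$, while the Chernoff tail must survive the enumerative loss $(en/a)^a$ from the union bound. The three hypotheses $k \leq n/4$, $a \leq 2n/(k\log n)$, and $\dhat/k = \omega(\log^3 n)$ are precisely what supplies a $\log\log n$-factor of slack, making all three estimates compatible.
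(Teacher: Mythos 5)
Your treatment of \eqref{eq:edge_neighbourhood_estimate:small} is essentially the paper's argument: identify $|\eNhd[1]{A}|$ as $\Bin(m_a,p)$, show $m_a=(1+O(1/\log n))\,a\binom{n-1}{k-1}$ deterministically, apply the Chernoff bound with deviation $\Theta(\estimerror)\,a\dhat/k$, and take a union bound over all small sets. Your computation of $m_a$ by complementary counting and telescoping, $\binom{n}{k}-\binom{n-a}{k}=\sum_{i=0}^{a-1}\binom{n-1-i}{k-1}$, is a clean substitute for the paper's inclusion--exclusion estimate, and the error bookkeeping (deterministic error $O(1/\log n)=o(\estimerror)$; Chernoff exponent $\omega(a\log n\log\log n)$ beating the enumerative factor $\binom{n}{a}$) is correct.

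The gap is in your deduction of \eqref{eq:edge_neighbourhood_estimate:large}. You derive it entirely from \eqref{eq:edge_neighbourhood_estimate:small} by passing to a subset $A'$ of size $\lceil n/(k\log n)\rceil$, but for $A'$ to fall within the scope of the first statement you need $\lceil n/(k\log n)\rceil \le 2n/(k\log n)$, i.e.\ $k\lesssim n/\log n$. The lemma assumes only $k\le n/4$ and $\dhat/k=\omega(\log^3 n)$, with no upper bound on $\dhat$, so the regime $n/\log n \ll k\le n/4$ is permitted; there the first statement is vacuous --- no nonempty $A$ satisfies $|A|\le 2n/(k\log n)$ --- and your argument produces nothing, even though \eqref{eq:edge_neighbourhood_estimate:large} is a nontrivial probabilistic claim already for singletons. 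The paper avoids this by proving the lower bound directly for \emph{all} $|A|\le n/(2k)$ (a range that is nonempty whenever $k\le n/4$) via a cruder one-sided Chernoff bound based on $\expec{|\eNhd[1]{A}|}\ge \tfrac{a}{2}\binom{n-1}{k-1}p = a\dhat/(2k)$, which needs only $\dhat/k=\omega(\log n)$ to survive the union bound, and only then invokes monotonicity for larger $A$. You should either add such a direct argument for $|A|\le n/(2k)$, or state explicitly that you are additionally assuming $k=o(n/\log n)$ (true in the paper's applications, but not among the lemma's hypotheses).
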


\begin{proof}
 We will start by proving the first statement. Let $A \subseteq [n]$ be given where $a:= |A| \leq \frac{2 n}{k \log n}$ 
 and let $X = |\eNhd[1]{A}|$ denote the number of edges in $G$ that contain at least one vertex of $A$. 
Let us write $\edgecompleteset$ for the number of edges meeting $A$ in the complete $k$-uniform hypergraph $K^k(n)$ on $n$ vertices. 
 A standard inclusion-exclusion type argument implies that 
\[
\edgecompleteset= \sum_{j =1}^{\minp{}{a,k}} \binom{a}{j} \binom{n-j}{k-j} (-1)^{j+1}.\]
The (absolute) ratio of consecutive terms in the sum is given as

\begin{equation}\label{eq:Proof:Lemma:edge_neighbourhood_estimate}
 \frac{\binom{a}{j} \binom{n-j}{k-j}}{\binom{a}{j-1} \binom{n-j+1}{k-j+1}} = \frac{(a-j+1) (k-j+1)}{ j(n-j+1)} < \frac{a k}{n} \leq \frac{1}{2}, \quad \text{for all } 2 \leq j \leq \minp{}{a,k}.
\end{equation}
Hence, $\edgecompleteset$ is dominated by the first term of the sum and the total contribution from all latter terms is at most an $O\left(\frac{ak}{n} \right)$-fraction of this value. More formally, 
\[
M = \sum_{j =1}^{\min\{a,k\}} \binom{a}{j} \binom{n-j}{k-j} (-1)^{j+1} = 
a \binom{n-1}{k-1} \left(1 + O\left(\frac{ak}{n} \right)\right).
\]
Since $X \sim \text{Bin}(M,p)$, it follows from $\dhat \coloneqq pk \binom{n-1}{k-1}$ (see \eqref{eq:average_degree_hat}) and the fact that $\frac{ak}{n}= O\left(\frac{1}{ \log n}\right) = o(\delta)$ that
\[
\expec{X} = Mp = p a \binom{n-1}{k-1} \left(1 + O\left( \frac{1}{\log n}\right)\right) = \frac{a \dhat}{k}\left(1 + o(\delta)\right).
\]
Hence, by the Chernoff bound \eqref{thm:Chernoff:|x-Ex|} it follows that 
\[
\prob{\left|X-\frac{a\dhat}{k}\right|> \frac{a\dhat \estimerror }{k}} \leq 2 \exp \left(-\frac{a \dhat \estimerror^2}{3k} \right).
\]

Therefore, by a union bound, the probability that there exists a set $A \subseteq [n]$ with $|A| \leq \frac{2 n}{k \log n}$ such that $|\eNhd[1]{A}|$ differs from $|A|\frac{\dhat}{k}$ by more than $\frac{|A|\dhat \estimerror}{k}$ is at most
\[
\sum_{a = 1}^{\frac{2 n}{k \log n}}  2 \binom{n}{a} \exp \left(-\frac{ a \dhat \estimerror^2}{3k} \right) \leq \sum_{a =  1}^\infty 2 n^a n^{-a \omega (1)} = o(1),
\]
where we used our assumption that $\frac{\dhat}{k } = \omega \left(\log^3 n\right) = \omega \left(\estimerror^{-2} \log n \right)$.
Therefore, whp 

\[
 \left(1-\estimerror\right)\frac{a \dhat}{k} \leq |\eNhd[1]{A}| \leq  \left(1+\estimerror \right)\frac{a\dhat}{k}.
 \]

To show the second statement we split into two cases. Firstly, let us assume that $a:=|A| \leq \frac{n}{2k}$. 
Let $X$ and $\edgecompleteset$ be defined as above and note that by our assumption on the size of $A$ \eqref{eq:Proof:Lemma:edge_neighbourhood_estimate} still holds. 
Hence, $\edgecompleteset$ is an alternating sum with decreasing terms, and therefore is bounded from below by the difference of the first two terms, and it follows from \eqref{eq:Proof:Lemma:edge_neighbourhood_estimate} that
\[
M \geq \frac{a}{2}\binom{n-1}{k-1}.
\]
Again, since $X \sim \text{Bin}(M,p)$ and using $d \coloneqq pk\binom{n-1}{k-1}$ we have
\begin{equation*}
\expec{X} = M p \geq  p \frac{a}{2} \binom{n-1}{k-1} = \frac{a\dhat}{2k},
\end{equation*}
and hence, by the Chernoff bound \eqref{thm:Chernoff:onesided},
\[
\prob{X \leq \frac{a\dhat}{4k}} \leq \prob{X \leq  \frac{1}{2} \expec{X}} \leq \exp\left(-\frac{\expec{X}}{8}\right) \leq \exp \left(-\frac{a \dhat}{16 k}\right).
\]
Thus, by a union bound, we can bound the probability that there exists a set $A\subseteq [n]$ with $|A| \leq \frac{n}{2k}$ and $|\eNhd[1]{A}| \leq \frac{a\dhat}{4k}$ from above by
\[
\sum_{a = 1}^{\frac{n}{2k}} \binom{n}{a} \exp \left(-\frac{a \dhat }{16 k } \right) \leq \sum_{a= 1}^\infty n^a n^{-a \omega(1)} =  o(1),
\]
where we used our assumption that $\frac{\dhat}{k} = \omega(\log n)$. 
Therefore, whp for every set $A \subseteq [n]$ with $|A| \leq \frac{n}{2k}$,
\[
|\eNhd[1]{A}| \geq\frac{a\dhat}{4k}  .
\]

If $A$ is such that $ a > \frac{n}{2k}$ we pick the largest possible subset $A' \subseteq A$ such that $a'k:=|A'| k \leq n/2 $. Therefore, $( a'+1)k > n/2$, and so $a'k \geq n/2 - k > n/4$.  
Using the previous argument and the fact that $\dhat \geq k$ we see that 
\[
|\eNhd[1]{A}| \geq \left|\eNhd[1]{A'}\right| \geq \frac{a'\dhat}{4 k} \geq \frac{a'k}{4k} \geq \frac{n}{16k}.
\]

In total, the previous two cases yield that whp for all subsets $A\subseteq [n]$,
\[
|\eNhd[1]{A}| \geq  \minp{}{\frac{a\dhat}{4k},\frac{n}{16k}} \geq \frac{1}{16} \minp{}{\frac{a\dhat}{k},\frac{n}{k}}.
\]

\end{proof}
To show an analogous result for the vertex-neighbourhood, we recall that for a set of edges $B$ we defined $\vine{B} = \{v \in e \colon e \in B\}$ and that the vertex-neighbourhood $\vNhd[1]{A}$ of a set $A$ can be written as $\vine{\eNhd[1]{A}}$. Thus, to show that the first vertex-neighbourhood expands well, it will suffices to show that in $\Gknp$ sets of edges are unlikely to have large overlaps.
\begin{lem}\label{lemma:edge_set_overlap}
Assume the parameters of $G = \Gknp$ are such that $ k = \omega(\log n) $, $ k \leq 2^{-11} n$ and $\dhat \leq n$.  
Then whp the following holds. 

For every subset $B\subseteq E(G)$ of edges and every $\epsilon = \epsilon(n) \in \left(0,\frac{1}{2}\right]$ such that $\left(\frac{|B|k}{n}\right)^\epsilon \leq 2^{-5}$, 
\begin{equation}\label{eq:edge_set_overlap:small}
| \vine{B} | \geq \left(1-\epsilon \right) |B| k .  
\end{equation}
Moreover, for every subset $B\subseteq E(G)$ of edges,
\begin{equation}\label{eq:edge_set_overlap:large}
|\vine{B}| \geq 2^{-12} \min \{|B| k, n\}.   
\end{equation}

\end{lem}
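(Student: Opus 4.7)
The plan is to establish \eqref{eq:edge_set_overlap:small} by a direct union bound parameterised by $b = |B|$, and then to deduce \eqref{eq:edge_set_overlap:large} from it by applying the statement with $\epsilon = 1/2$, possibly after first passing to a subset of $B$ of controlled size.

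For \eqref{eq:edge_set_overlap:small}, I would observe that the event ``there is some $B \subseteq E(G)$ with $|B|=b$ and $|\vine{B}| < (1-\epsilon)bk$'' is contained in the event ``some $V' \subseteq [n]$ with $|V'| = v := \lfloor (1-\epsilon)bk \rfloor$ contains at least $b$ edges of $G$'', whose probability is at most
\[
\binom{n}{v}\binom{\binom{v}{k}}{b} p^b.
\]
Using $\binom{n}{v} \leq (en/v)^v$, $\binom{\binom{v}{k}}{b} p^b \leq (ep\binom{v}{k}/b)^b$, and $\binom{v}{k} \leq \binom{n}{k}(v/n)^k$ together with the identity $p\binom{n}{k} = \dhat n / k^2$, the bound after substituting $b \approx v/((1-\epsilon)k)$ and taking $v$-th roots becomes
\[
e \cdot \bigl(e(1-\epsilon)\dhat/k\bigr)^{1/((1-\epsilon)k)} \cdot (v/n)^{(k\epsilon - 1)/((1-\epsilon)k)}.
\]
The first two factors are $1+o(1)$, using $\dhat \leq n$ and $k = \omega(\log n)$; and since the exponent of $v/n$ equals $\epsilon/(1-\epsilon) - 1/((1-\epsilon)k)$, which is at least $0.9\epsilon/(1-\epsilon) \geq 0.9\epsilon$ once $k\epsilon \to \infty$ (a consequence of $k = \omega(\log n)$ together with the lower bound $\epsilon \geq 5/\log_2(n/(bk))$ implicit in the hypothesis), combining with $(v/n)^\epsilon \leq (bk/n)^\epsilon \leq 2^{-5}$ gives that the whole base is at most $(1+o(1))e \cdot 2^{-4.5} < 1/2$. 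Thus each $b$ contributes at most $2^{-v} \leq 2^{-k/2 + O(1)}$ to the union bound, and summing the resulting geometric series over the admissible range $1 \leq b \leq 2^{-10}n/k$ (the only $b$ for which a valid $\epsilon \leq 1/2$ exists) yields a total probability of $o(1)$.

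The main technical subtlety is this calibration of exponents: the hypothesis $(bk/n)^\epsilon \leq 2^{-5}$ is exactly what is needed to beat the combinatorial factor $\binom{n}{v}$, and both hypotheses $k = \omega(\log n)$ and $\dhat \leq n$ are used essentially --- the former to guarantee $k\epsilon \to \infty$ and to control the auxiliary factor $(\dhat/k)^{1/((1-\epsilon)k)} \leq n^{1/((1-\epsilon)k)} = e^{\log n/((1-\epsilon)k)} \to 1$, the latter to bound $\dhat/k$ in the first place.

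For \eqref{eq:edge_set_overlap:large}, I would split into two cases. If $|B|k \leq 2^{-10}n$, then $(|B|k/n)^{1/2} \leq 2^{-5}$ and \eqref{eq:edge_set_overlap:small} applied with $\epsilon = 1/2$ gives $|\vine{B}| \geq |B|k/2 \geq 2^{-12} \min\{|B|k, n\}$. If $|B|k > 2^{-10}n$, I pass to a subset $B' \subseteq B$ of size $|B'| = \lfloor 2^{-11} n/k \rfloor$, which is at least $1$ by the hypothesis $k \leq 2^{-11} n$ and satisfies $|B'|k \leq 2^{-10} n$; applying \eqref{eq:edge_set_overlap:small} to $B'$ with $\epsilon = 1/2$ then gives $|\vine{B}| \geq |\vine{B'}| \geq |B'|k/2 \geq 2^{-12} n = 2^{-12}\min\{|B|k, n\}$.
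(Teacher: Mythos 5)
Your argument is correct and, for the main estimate \eqref{eq:edge_set_overlap:small}, it takes a genuinely different route from the paper. The paper runs a first-moment count directly over ``bad'' edge sets: it encodes a $b$-edge set $B$ with $|\vine{B}|\leq (1-\epsilon)bk$ by its vertex set, a multiplicity vector, and a partition of the resulting multiset into $k$-sets (bounding the number of partitions crudely by $b^{bk}$), and then uses $p\leq (k/n)^{k-2}$. You instead dualise the event: a $b$-edge set spanning few vertices forces some vertex set of size $v=\lfloor (1-\epsilon)bk\rfloor$ to contain at least $b$ edges, and you union-bound over such vertex sets via $\binom{n}{v}\binom{\binom{v}{k}}{b}p^b$. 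This is arguably cleaner --- it avoids the multiset-partition bookkeeping entirely --- and your calibration of exponents is sound: the hypotheses enter in the same roles as in the paper ($k=\omega(\log n)$ forces $k\epsilon\to\infty$ and kills the $(\dhat/k)^{1/((1-\epsilon)k)}$ factor; $\dhat\leq n$ bounds $p\binom{n}{k}$; and $(bk/n)^\epsilon\leq 2^{-5}$ beats the entropy of $\binom{n}{v}$). One small point to make explicit in a full write-up: since the statement quantifies over all valid $\epsilon$, you should observe that for fixed $b$ the bad events are nested in $\epsilon$, so it suffices to run the union bound with the smallest admissible $\epsilon$ for each $b$; your estimate is uniform in $\epsilon$, so this is immediate. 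Your deduction of \eqref{eq:edge_set_overlap:large} is essentially the paper's, except for a harmless slip in the last case: with $|B'|=\lfloor 2^{-11}n/k\rfloor$ the floor can cost a factor of $2$, giving only $|\vine{B'}|\geq 2^{-13}n$; taking instead the largest $B'\subseteq B$ with $|B'|\leq 2^{-10}n/k$ (as the paper does) recovers the stated constant $2^{-12}$.
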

\begin{proof} 
We start by showing the first statement. Let $b=b(n) \in \mathbb{N}$ and $\epsilon = \epsilon(n) \in \left(0,\frac{1}{2}\right]$ be such that $\left(\frac{bk}{n}\right)^\epsilon \leq 2^{-5}$. We set $\tb \coloneqq bk(1-\epsilon)$ and denote by $X_b$ the number of edge sets $B$ in $G$ with $|B|=b$ and $|V_B| \leq \tb$. 

To bound the expectation of $X_b$, we count the number of possible choices for such an edge set $B$ as follows: We think of $B$ as a partition of the multi-set $\hat{V}_B$, which is given by including each element $x$ of $V_B$ with multiplicity equal to the number of edges in $B$ which contain $x$, into $k$-sets. In particular, each such edge set $B$ can be specified by fixing the set $V_B$ of size $t \leq \tb$, the vector $(x_1,\ldots, x_t)$ determining the multiplicity of each vertex in $B$, and a partition of the multi-set $\hat{V}_B$ into $b$ many $k$-sets.

Now, there are at most $\binom{n}{t}$ many possible sets $V_B$ and, since $\sum_{i=1}^t x_i = bk$, it follows that there are at most $\binom{bk+t-1}{t}$ ways to choose the vector $(x_1,\ldots,x_t)$. Finally, a crude upper bound for the number of partitions is $b^{bk}$, since each of the $bk$ vertices in $\hat{V}_B$ has to be assigned to one of the $b$ many $k$-sets.

It follows that
\begin{equation}\label{e:Xexp}
\expec{X_b} \leq \sum_{t=1}^{tb} \binom{n}{t}\binom{bk+t-1}{t} b^{bk} p^b.
\end{equation}
Using our assumption that $\hat{d} \leq n$ we get that
\[
p \leq \frac{n}{k \binom{n-1}{k-1}} \leq \frac{n}{k \left(\frac{n-1}{k-1}\right)^{k-1}} \leq \left(\frac{k}{n}\right)^{k-2},
\]
and hence, using the fact that $\binom{n}{t}$ is increasing for $1 \leq t \leq \tb$, we can bound $\expec{X_b}$ from above by
\begin{align*}
\expec{X_b} &\leq \sum_{t = 1}^{\tb} \binom{n}{t} \binom{bk +t-1}{t} b^{bk} p^b\\
&\leq t_b \left( \frac{en}{t_b} \right)^{t_b} 2^{2bk} b^{bk} \left(\frac{k}{n}\right)^{b(k-2)}\\
&= \left(t_b^{\frac{1}{bk}} \left( \frac{n}{k} \right)^{\frac{2}{k}} \frac{4e^{1-\epsilon}}{(1-\epsilon)^{1-\epsilon} }\left(\frac{bk}{n}\right)^{\epsilon} \right)^{bk}.\\
\end{align*}
However, since $k=\omega(\log n)$ and $bk \leq n$, it follows that $t_b^{\frac{1}{bk}} \left( \frac{n}{k} \right)^{\frac{2}{k}} \leq 2$. Furthermore, it is easy to check that $(1-\epsilon)^{(1-\epsilon)}$ is decreasing on $\left(0,\frac{1}{2}\right]$ and hence
\begin{align*}
\expec{X_b} &\leq \left(\frac{8e}{\sqrt{2} }\left(\frac{bk}{n}\right)^{\epsilon} \right)^{bk} \leq \left(2^4 \left(\frac{bk}{n}\right)^{\epsilon} \right)^{bk} = o\left (\frac{1}{n}\right),
\end{align*}
where we used our assumption $2^4\left(\frac{bk}{n}\right)^{\epsilon}  \leq \frac{1}{2}$ and $bk = \omega( \log n)$. In particular, since $b \leq 2^{-\frac{5}{\epsilon}} \frac{n}{k} \leq n$, we can conclude by a union bound over all possible values of $b$ that whp there are no edge sets violating the first part of the lemma.

To show the second statement, suppose that $B \subseteq E(G)$ is given. If $|B| \leq \frac{n}{2^{10}k}$, then 
\[
\left( \frac{|B|k}{n}\right)^{\frac{1}{2}} \leq 2^{-5},
\]
and so by the first part of the lemma with $\epsilon=\frac{1}{2}$, we have
\[
|\vine{B}| \geq \frac{1}{2} |B|k \geq 2^{-12} \min \{ |B|k,n\}.
\]

If $|B|  > \frac{n}{2^{10}k}$ we simply pick a largest subset $B' \subset B$ such that $b' := |B'| \leq \frac{n}{2^{10}k}$. Then, $b' >  \frac{n}{2^{10}k} - 1 \geq \frac{n}{2^{11}k}$, for large enough $n$.
By the previous observation, it follows that 
\[
| \vine{B} | \geq | V_{B'} | \geq b'k/2 \geq \frac{n}{2^{12}} \geq 2^{-12} \min \{ |B|k,n\},
\]
proving also the second statement.
\end{proof} 

We note that an immediate corollary of Lemmas \ref{lemma:edge_neighbourhood_estimate} and \ref{lemma:edge_set_overlap} is that not too large sets in $\Gknp$ have relatively uniform vertex expansion.

\begin{lem}\label{l:vertex_expansion}
Assume the parameters of $G = \Gknp$ are such that $\frac{\hat{d}}{k} = \omega(\log^3 n)$, $k=\omega(\log n)$ and $\hat{d}\leq n$. Then whp the following holds.
For every subset $A \subseteq V(G)$ and every $\epsilon=\epsilon(n) \in \left(0,\frac{1}{2}\right]$ such that $\left(\frac{|A| \hat{d}}{n}\right)^{\epsilon} \leq 2^{-6}$,
\begin{equation}\label{e:vertex_neighbourhood_estimate}
(1-\epsilon)\left(1-\estimerror \right) |A| \hat{d} \leq |N_V(A)| \leq \left(1+\estimerror \right) |A| \hat{d}.
\end{equation}
Moreover, for every subset $A \subseteq [n]$,
\begin{equation}\label{eq:vertex_neighbourhood_estimate:large}
2^{-16} \minp{}{|A| \hat{d},n} \leq  |N_V(A)| \leq 2^{12} |A| \hat{d}.
\end{equation}
\end{lem}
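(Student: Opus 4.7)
The plan is to combine \Cref{lemma:edge_neighbourhood_estimate} with \Cref{lemma:edge_set_overlap} through the identity $N_V(A) = V_{N_E(A)}$. We will work on the intersection of the high-probability events produced by those two lemmas.

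First I would establish the fine bound \eqref{e:vertex_neighbourhood_estimate}. Fix $A \subseteq V(G)$ and $\epsilon \in (0,\tfrac{1}{2}]$ with $(|A|\hat d / n)^\epsilon \leq 2^{-6}$. The key preliminary observation is that the hypothesis $\hat d / k = \omega(\log^3 n)$ together with $\hat d \geq k$ forces $|A|$ to be small enough to apply \Cref{lemma:edge_neighbourhood_estimate}\eqref{eq:edge_neighbourhood_estimate:small}. Indeed, $(|A|\hat d/n)^\epsilon \leq 2^{-6}$ with $\epsilon \leq \tfrac12$ gives $|A|\hat d \leq 2^{-12} n$, hence
\[
\frac{|A|k}{n} \leq 2^{-12}\,\frac{k}{\hat d} = o\!\left(\frac{1}{\log^3 n}\right),
\]
so in particular $|A| \leq 2n/(k \log n)$ for $n$ large. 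Applying \eqref{eq:edge_neighbourhood_estimate:small} then yields
\[
(1-\estimerror)\,|A|\tfrac{\hat d}{k} \leq |N_E(A)| \leq (1+\estimerror)\,|A|\tfrac{\hat d}{k}.
\]
The upper bound on $|N_V(A)|$ is immediate from $|N_V(A)| = |V_{N_E(A)}| \leq k\,|N_E(A)|$. For the lower bound we set $B \coloneqq N_E(A)$ and verify the hypothesis of \Cref{lemma:edge_set_overlap}: since $|B|k/n \leq (1+\estimerror)\,|A|\hat d / n$, we have
\[
\bigl(|B|k/n\bigr)^{\epsilon} \leq (1+\estimerror)^{\epsilon}\,(|A|\hat d/n)^{\epsilon} \leq 2\cdot 2^{-6} = 2^{-5}.
\]
\Cref{lemma:edge_set_overlap} then gives $|N_V(A)| = |V_B| \geq (1-\epsilon)|B|k \geq (1-\epsilon)(1-\estimerror)|A|\hat d$, as required.

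Next, I would derive the crude bound \eqref{eq:vertex_neighbourhood_estimate:large} by essentially the same recipe but using the second halves of the two lemmas. For the lower bound, set $B \coloneqq N_E(A)$ and apply \eqref{eq:edge_neighbourhood_estimate:large} followed by \eqref{eq:edge_set_overlap:large}:
\[
|N_V(A)| = |V_B| \geq 2^{-12}\min\{|B|k,n\} \geq 2^{-12}\min\!\left\{\tfrac{1}{16}\min\{|A|\hat d,n\},\,n\right\} = 2^{-16}\min\{|A|\hat d, n\},
\]
where in the last step one simply notes $\tfrac{1}{16}\min\{|A|\hat d, n\} \leq n$. For the matching upper bound, split into cases: if $|A|\hat d \geq n/2^{12}$ then $2^{12}|A|\hat d \geq n \geq |N_V(A)|$ holds trivially; otherwise $(|A|\hat d/n)^{1/2} \leq 2^{-6}$ and the fine upper bound from \eqref{e:vertex_neighbourhood_estimate} (with $\epsilon = 1/2$) already yields $|N_V(A)| \leq (1+\estimerror)|A|\hat d \leq 2^{12}|A|\hat d$.

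I expect the main subtlety to lie in verifying that the hypothesis of \Cref{lemma:edge_set_overlap} really is inherited by $B = N_E(A)$ from the hypothesis on $A$, and that the set-size regime permitted by the sharp version of \Cref{lemma:edge_neighbourhood_estimate} is large enough to cover every $A$ satisfying the standing condition $(|A|\hat d/n)^\epsilon \leq 2^{-6}$. Both of these are straightforward once one uses $\hat d/k = \omega(\log^3 n)$ to convert bounds on $|A|\hat d/n$ into bounds on $|A|k/n$; no additional probabilistic argument is required beyond the two preceding lemmas.
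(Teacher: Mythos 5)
Your proposal is correct and follows essentially the same route as the paper: both establish the fine bound by feeding the sharp part of \Cref{lemma:edge_neighbourhood_estimate} into \Cref{lemma:edge_set_overlap} via $N_V(A)=V_{N_E(A)}$ (with the same verification that $(|B|k/n)^\epsilon\leq 2^{-5}$), and derive the crude bound by chaining the second halves of the two lemmas, differing only in how the case split for the second statement is organised. No gaps.
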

\begin{proof}
We start by showing the first statement. Given a set $A$ and $\epsilon$ satisfying the conditions of the corollary, we note that, since $\hat{d} = \omega \left( k \log^3 n\right)$ and $\epsilon \leq \frac{1}{2}$,
\[
|A| \leq 2^{-\frac{6}{\epsilon}} \frac{n}{\hat{d}} = o\left(\frac{n}{k\log n} \right).
\]
Hence we can apply the first part of Lemma \ref{lemma:edge_neighbourhood_estimate}  to conclude that whp
\begin{equation}\label{e:intermediarystep}
 \left(1-\estimerror\right)|A|\frac{\dhat}{k} \leq |\eNhd[1]{A}| \leq  \left(1+\estimerror \right)|A|\frac{\dhat}{k}.
\end{equation}

If we let $B = |\eNhd[1]{A}|$ then it is immediate that
\[
|N_V(A)| = |V_B| \leq k|B| \leq \left(1+\estimerror \right)|A| \hat{d}.
\]
On the other hand, by \eqref{e:intermediarystep} and our assumption on $|A|$
\[
\left(\frac{|B|k}{n}\right)^{\epsilon} \leq \left( \left(1+\estimerror\right)|A|\frac{\dhat}{n}\right)^{\epsilon} \leq 2 \left(|A|\frac{\dhat}{n}\right)^{\epsilon} \leq 2^{-5}.
\]
Hence we can apply Lemma \ref{lemma:edge_set_overlap} to conclude that
\[
|N_V(A)| = |V_B| \geq (1-\epsilon)|B|k \geq (1-\epsilon) \left(1-\estimerror \right)|A| \hat{d}
\]
as claimed.

To show the second statement, let $A \subseteq [n]$ and assume first that $|A| \dhat \leq 2^{-12}n.$ Then, by the first statement with $\epsilon = \frac{1}{2}$,
\begin{align*}
  \frac{1}{4} |A| \hat{d} \leq \frac{1}{2} \left(1-\estimerror \right) |A| \hat{d} \leq |N_V(A)| \leq \left(1+\estimerror \right) |A| \hat{d} \leq 2 |A| \hat{d},
\end{align*}showing the second statement in this case. 
On the other hand, for $|A| \dhat > 2^{-12}n$, note first the trivial upper bound
\begin{align*}
    |\vNhd[1]{A}| \leq n \leq 2^{12} |A| \dhat.
\end{align*}
For the lower bound, we can apply the second part of Lemma \ref{lemma:edge_neighbourhood_estimate} to conclude that 
\begin{align*}
|\eNhd[1]{A}| \geq 2^{-4} \minp{}{|A| \frac{\dhat}{k},\frac{n}{k}}.
\end{align*}
Setting $B = \eNhd[1]{A}$ it now follows from \Cref{lemma:edge_set_overlap} that
\begin{align*}
    |N_V(A)| = |V_B| \geq 2^{-12} \minp{}{|B|k,n} \geq 2^{-16} \minp{}{|A| \dhat,n},
\end{align*} concluding the proof. 
\end{proof}

Note that, in particular, Lemma \ref{l:vertex_expansion} implies that $|\vNhd[1]{v}|$ is roughly $\dhat$ for every vertex $v$ of $\Gknp$. To prove Theorem \ref{thm:main:Gknp} we will first show that neighbourhoods in $\Gknp$ expand well in terms of $\dhat$ and then finally conclude by showing that $d$ is whp close to $\dhat$. 

\begin{proof}[Proof of Theorem  \ref{thm:main:Gknp}]
We note first that by our assumptions on $k$ and $p$, $\Gknp$ satisfies the conditions of Lemmas \ref{lemma:edge_neighbourhood_estimate}, \ref{lemma:edge_set_overlap} and \ref{l:vertex_expansion}. We therefore assume in what follows that the conclusions of these lemmas hold deterministically in $\Gknp$.

For $i \in \{1,2,3\}$, we say a graph $G$ satisfies Property $\mathbf{(A. i)}$', if there exists a constant $0 < \xi'_i \leq 1$ such that $G$ satisfies Property $\mathbf{(A.i)}$ for this constant, but when $d$ is replaced by $\dhat$ in \Cref{Property:neighbourhood_estimates}. We will start by showing that $\Gknp$ satisfies the Properties $\mathbf{(A.i)}$'. Taking a minimum over all respective constants, we obtain a universal constant $\xi'$ such that $G$ satisfies all properties $\mathbf{(A.i)}$' for this constant. We conclude by showing that $\dhat$ lies sufficiently close to $d$ and in particular that properties $\mathbf{(A.i)}$' imply properties $\mathbf{(A.i)}$ for a universal constant $\xi$ that is only a constant factor smaller than $\xi'$.

Our first step will be to show that the size of the vertex-neighbourhoods in $\Gknp$ grow relatively uniformly for small enough sets, by inductively applying Lemma \ref{l:vertex_expansion}. However, we will need to carefully pick the parameter $\epsilon$ in each step so that the cumulative error in these approximations is not too large.

Let $A \subseteq [n]$ with $a := |A|$ and let $r \in \mathbb{N}$ be such that $a\hat{d}^r \leq \frac{n}{2 \log n}$. Note that, since $\hat{d} = \omega\left( \log^4 n\right)$, it follows that $r \leq \frac{\log n}{4 \log \log n}$. Our aim will be to show the following
\begin{equation}\label{e:Aneighbourhoodbound}
2^{-5} a \dhat^{r} \leq  \left|\vNhd[r]{A}\right|   \leq 2a \dhat^{r}.
\end{equation}
In particular, note that if we take $A=\{v\}$ to be a single vertex then, since $\sqrt{nk} \leq \sqrt{n \frac{\dhat}{\log^3 n}} \leq \frac{n}{2 \log n}$, it follows from \eqref{e:Aneighbourhoodbound} that Property \ref{Property:neighbourhood_estimates:eNhd}' holds for $v$ with $\xi_1' = 2$.

In order to apply Lemma \ref{l:vertex_expansion}, let us set $\epsilon_0 = 0$ and
\begin{equation}\label{e:epsilondef}
\epsilon_i \coloneqq \frac{5}{\log n - \log \left(2 a\dhat^{i}\right)} \quad \text{for } 1\leq i \leq r.
\end{equation}
Note that, since $a\dhat^{i} \leq \frac{n}{2 \log n}$, it follows that $\epsilon_i = o(1)$ for each $i \leq r$.

We claim inductively that the following bound holds for each $0\leq i \leq r$:
\begin{equation}\label{Proof:Prop_estimates:induction:vNhd}
\prod_{j=0}^{i}\left(1-\epsilon_j \right) \left(1-\estimerror \right)^i a\dhat^i \leq |\vNhd[i]{A}| \leq   \left(1+\estimerror \right)^i  a\dhat^i,
\end{equation}
where the statement is clear for $i=0$.

Suppose that \eqref{Proof:Prop_estimates:induction:vNhd} holds for some $i <r$. Then, since $i < r = o\left( \frac{1}{\estimerror}\right)$, we have
\begin{equation}\label{e:loose_bounds}
\frac{1}{2}\prod_{j=0}^{i}\left(1-\epsilon_j \right) a \dhat^{i} \leq \prod_{j=0}^{i}\left(1-\epsilon_j \right) \left(1-\estimerror \right)^{i} a\dhat^{i} \leq \left|\vNhd[i]{A}\right|  \leq \left(1+ \estimerror \right)^{i}  a\dhat^{i} \leq 2 a\dhat^{i}
\end{equation}
and so
\[
\left(\frac{\left|\vNhd[i]{A}\right| \dhat}{n}\right)^{\epsilon_{i+1}} \leq \left(\frac{2a\dhat^{i+1}}{n}\right)^{ \frac{5}{\log n -  \log \left(2 a\dhat^{i+1}\right)} } \leq 2^{-6}.
\]
Hence we can apply Lemma \ref{l:vertex_expansion} to $\vNhd[i]{A}$ to conclude that
\[
\prod_{j=0}^{i+1}\left(1-\epsilon_j \right) \left(1-\estimerror \right)^{i+1} a\dhat^{i+1} \leq \left|N_V\left(\vNhd[i]{A}\right)\right| = \left|\vNhd[i+1]{A}\right|   \leq \left(1+\estimerror \right)^{i+1}  a\dhat^{i+1},
\]
and so the induction step holds.

In particular, taking $i=r$, it follows from \eqref{e:loose_bounds} that
\begin{equation}\label{e:almostA1}
\frac{1}{2} \prod_{j=0}^{r}\left(1-\epsilon_j \right)  \dhat^{r} \leq  \left|\vNhd[r]{A}\right|   \leq 2\dhat^{r}.
\end{equation}
Hence it remains to bound the term $\prod_{j=0}^{r}\left(1-\epsilon_j \right)$. The following claim, whose verification we defer to the end of the proof, provides such a bound.

\begin{claim}\label{claim:inequalities:proof:gknp}
Let $a \leq n$ and $r$ be such that $a\dhat^r \leq \frac{n}{2 \log n}$, and let $\epsilon_i$ be defined as in \eqref{e:epsilondef}. Then
\begin{align}
\prod_{j=0}^{r}\left(1-\epsilon_j \right)  \geq 2^{-4}. \label{Proof:Prop_estimates:epsilon_inequality}
\end{align}  
\end{claim}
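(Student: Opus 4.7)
The plan is to reduce the multiplicative statement to a sum bound via a sharp pointwise estimate. Since $\epsilon_0 = 0$, the product is effectively over $j = 1,\ldots,r$, and I will use the elementary inequality $1 - x \geq 4^{-x}$ for $x \in [0,1/2]$. This holds because both sides agree at $x = 0$ and at $x = 1/2$, while the function $(1-x) - 4^{-x}$ is concave on the interval (its second derivative is $-(\log 4)^2 \cdot 4^{-x} < 0$) and hence nonnegative between its two zeros. Once one shows that each $\epsilon_j \leq 1/2$ and that $\sum_{j=1}^{r}\epsilon_j \leq 2$, the claim follows immediately:
\[
\prod_{j=0}^{r}(1-\epsilon_j) \;=\; \prod_{j=1}^{r}(1-\epsilon_j) \;\geq\; \prod_{j=1}^{r} 4^{-\epsilon_j} \;=\; 4^{-\sum_{j=1}^{r}\epsilon_j} \;\geq\; 4^{-2} \;=\; 2^{-4}.
\]

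To estimate the sum, I would set $L_j := \log\bigl(n/(2a\dhat^{j})\bigr)$, so that $\epsilon_j = 5/L_j$ and $L_j = L_r + (r-j)\log\dhat$ form an arithmetic progression. The hypothesis $a\dhat^{r} \leq n/(2\log n)$ gives $L_r \geq \log\log n$, which both certifies $\epsilon_j \leq 5/\log\log n = o(1)$ (so each $\epsilon_j \leq 1/2$ for large $n$) and drives the integral comparison below. The standing assumptions $\dhat/k = \omega(\log^{3} n)$ and $k = \omega(\log n)$ together yield $\dhat = \omega(\log^{4} n)$, hence $\log\dhat \geq 4\log\log n$ for large $n$. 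Then, using $L_1 \leq \log n$ and comparing the decreasing sum with an integral,
\[
\sum_{j=1}^{r}\frac{1}{L_j} \;=\; \sum_{i=0}^{r-1}\frac{1}{L_r + i\log\dhat} \;\leq\; \frac{1}{L_r} + \frac{1}{\log\dhat}\log\!\left(\frac{L_1}{L_r}\right) \;\leq\; \frac{1}{\log\log n} + \frac{\log\log n}{4\log\log n},
\]
which is less than $1/3$ for large $n$. Multiplying by $5$ gives $\sum_{j=1}^{r}\epsilon_j \leq 5/3 < 2$, as required.

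The main obstacle is the quantitative calibration. The target constant $2^{-4}$ is tight enough that cruder substitutes, such as $1 - x \geq e^{-2x}$, would be insufficient: with that estimate one would need $\sum \epsilon_j \leq 2\log 2 \approx 1.39$ to recover $2^{-4}$. The inequality $1 - x \geq 4^{-x}$ on $[0,1/2]$ is tuned precisely so that the bound $\sum \epsilon_j \leq 2$ yields exactly $2^{-4}$, and the geometric decay coming from $\log\dhat \geq 4\log\log n$ is just fast enough to force the required summation bound. Everything else is a routine integral-comparison calculation.
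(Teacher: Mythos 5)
Your proof is correct and takes essentially the same route as the paper: both convert the product to a sum via a pointwise exponential lower bound and then exploit that the logarithms $\log n - \log\bigl(2a\dhat^{\,j}\bigr)$ form an arithmetic progression with gap $\log\dhat \geq 4\log\log n$ and smallest term at least $\log\log n$. The only (harmless) differences are in the bookkeeping — you use $1-x \geq 4^{-x}$ together with an integral comparison giving $\sum_j \epsilon_j \leq 5/3$, whereas the paper uses $1-x \geq e^{-2x}$ together with a harmonic-sum estimate giving the slightly sharper $\sum_j \epsilon_j \leq 5/4$; both calibrations land above $2^{-4}$.
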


It is now clear that \eqref{e:almostA1} and Claim \ref{claim:inequalities:proof:gknp} together imply \eqref{e:Aneighbourhoodbound}.

Now let us turn to Property \ref{Property:neighbourhood_estimates:vNhd}'. Let us fix a subset $A \subseteq [n]$ with $a:=|A|$ and $r \in \mathbb{N}$. We let $r_0 = \min \Bigl\{ r, \maxp{}{i : |A|\dhat^i \leq \frac{n}{2 \log n}}\Bigr\}$ and let $A' = N_V^{r_0}(A)$. Then, by \eqref{e:Aneighbourhoodbound}, we have
\begin{equation}\label{e:A'bound}
 2^{-5} a \dhat^{r_0} \leq  \left|A'\right|  \leq 2a \dhat^{r_0}.
\end{equation}
In particular, if $r=r_0$, then \eqref{e:A'bound} implies that \ref{Property:neighbourhood_estimates:vNhd}' holds for $A$ with $\xi_2'= 2^{-5}$.

If $r = r_0+1$, then by \eqref{e:A'bound} and the second part of \Cref{l:vertex_expansion} 
\begin{align}\label{e:A'r0+1bound}
    2^{-21} \minp{}{ad^r,n} \leq |\vNhd[1]{A'}| = |\vNhd[r]{A}| \leq 2^{13} ad^r,
\end{align}
implying that Property \ref{Property:neighbourhood_estimates:vNhd}' holds for $A$ with $\smoothconst_2' = 2^{-21}$. Finally, for $r \geq r_0+2$ note that since $\dhat = \omega(\log^3 n)$, we have $a\dhat^r \geq n$, and so applying the second part of \Cref{l:vertex_expansion} to $\vNhd[1]{A'}$ yields together with \eqref{e:A'r0+1bound}
\begin{align*}
   2^{-37}n = 2^{-37} \minp{}{ad^r,n} = 2^{-37} \minp{}{ad^{r_0+2},n} \leq |\vNhd[2]{A'}| \leq |\vNhd[r]{A}| \leq n \leq ad^r,
\end{align*} hence Property \ref{Property:neighbourhood_estimates:vNhd}' holds also in this case with $\smoothconst_2' = 2^{-37}$.

Let us finally turn to Property \ref{Property:neighbourhood_estimates:vNhd:edges}'. Let $B \subseteq E(G)$. By the second part of Lemma \ref{lemma:edge_set_overlap}, we conclude that
\[
|\vine{B}| \geq 2^{-12} \minp{}{|B|k,n}.
\]
However, since $N_V^r(B) = N_V^r(V_B)$, we can apply \ref{Property:neighbourhood_estimates:vNhd}' to $V_B$ to conclude that
\begin{align*}
   |\vNhd[r]{B}| &=  |\vNhd[r]{\vine{B}}| \geq 2^{-37} \minp{}{|\vine{B}|\dhat^r,n} \\
   &\geq 2^{-37} \minp{}{2^{-12}\minp{}{|B|k,n}\dhat^r,n} \\
   &\geq 2^{-49} \minp{}{|B|k\dhat^r,n }, 
\end{align*}
and hence $\Gknp$ satisfies Property \ref{Property:neighbourhood_estimates:vNhd:edges}' with $\xi'_3=2^{-49}$.
It remains to show that Properties $\mathbf{(A.i)}$' imply Properties $\mathbf{(A.i)}$. To this end we note that if $r_1 =  \maxp{}{i : d^i \leq n}$ then it is sufficient to show that Properties $\mathbf{(A.i)}$ hold for all $r \leq r_1$. It is clear then that the result follows from the following claim, which we again verify at the end of the proof.

\begin{claim}\label{claim:inequalities:proof:gknp:two}
For all $r \leq r_1$, we have
\begin{align}
\frac{1}{2^{37}} d^r \leq \dhat^r \leq 2^{37} d^r.\label{Proof:Prop_estimates:d_inequality}
\end{align}  
\end{claim}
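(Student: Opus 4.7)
The plan is to show that the average degree $d$ of $\Gknp$ lies close enough to $\hat{d}$ that both $(\hat{d}/d)^{r}$ and $(d/\hat{d})^{r}$ stay well below $2^{37}$ for all $r \leq r_1$. Since $d = n^{-1}\sum_v|\vNhd[1]{v}|$, any uniform pointwise estimate on the first vertex-neighbourhoods transfers to an estimate on $d$, and the natural tool for extracting such an estimate is \Cref{l:vertex_expansion} applied to singletons. I would split into two cases according to the size of $\hat{d}$.

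\emph{Case 1: $\hat{d} > n/2^{12}$.} Here the sharper first part of \Cref{l:vertex_expansion} is unavailable, but the second part applied to each $\{v\}$ gives $2^{-16}\hat{d} \leq |\vNhd[1]{v}| \leq 2^{12}\hat{d}$, whence $d \in [2^{-16}\hat{d},\,2^{12}\hat{d}]$. In particular $d > n/2^{28}$, so $d^{2} > n$ for $n$ sufficiently large, forcing $r_1 = 1$. Consequently $(\hat{d}/d)^{r} \leq 2^{16}$ and $(d/\hat{d})^{r} \leq 2^{12}$ for $r \leq r_1$, both comfortably below $2^{37}$.

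\emph{Case 2: $\hat{d} \leq n/2^{12}$.} Set $\epsilon \coloneqq 6/\log_{2}(n/\hat{d}) \in (0,1/2]$, so that $(\hat{d}/n)^{\epsilon} = 2^{-6}$. Applying the first part of \Cref{l:vertex_expansion} to each singleton and averaging over $v$ yields $(1-\epsilon)(1-\estimerror)\hat{d} \leq d \leq (1+\estimerror)\hat{d}$. Using the inequality $1/(1-x) \leq e^{2x}$ valid on $[0,1/2]$, this gives $(\hat{d}/d)^{r} \leq e^{2r(\epsilon+\estimerror)}$ and $(d/\hat{d})^{r} \leq e^{r\estimerror}$, reducing the task to bounding $r\epsilon$ and $r\estimerror$ for $r \leq r_1 \leq \log n/\log d$.

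The term $r\estimerror$ is easy: since $d \geq (1-\epsilon)(1-\estimerror)\hat{d}$ and $\hat{d} = \omega(\log^{3}n)$, we have $\log d = \omega(\log\log n)$, whence
\[
r\estimerror \leq \frac{\log n}{\log d}\cdot\frac{\sqrt{\log\log n}}{\log n} = \frac{\sqrt{\log\log n}}{\log d} = o(1).
\]
The more delicate term is $r\epsilon$, for which
\[
r\epsilon \leq \frac{6\log 2\cdot\log n}{\log d\cdot\log(n/\hat{d})}.
\]
The main obstacle is to bound this uniformly over the range $\hat{d} \in (\omega(\log^{3}n),\,n/2^{12}]$, since the denominator can be small near either endpoint. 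I would handle this by splitting on whether $\log\hat{d} \leq \tfrac{1}{2}\log n$ or not: in the former subcase $\log(n/\hat{d}) \geq \tfrac{1}{2}\log n$ and $\log d = \omega(\log\log n)$; in the latter $\log(n/\hat{d}) \geq 12\log 2$ and $\log d \geq (\tfrac{1}{2}-o(1))\log n$. Either way, the denominator exceeds the numerator up to a constant, giving $r\epsilon \leq 1+o(1)$. Combining, $r(\epsilon+\estimerror) \leq 1+o(1)$, so both ratios are at most $e^{2+o(1)} \ll 2^{37}$, completing the proof.
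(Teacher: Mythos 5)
Your proof is correct and follows essentially the same route as the paper: both derive $d \approx \hat{d}$ by applying \Cref{l:vertex_expansion} to singletons and averaging, and then control the compounded error over $r$ factors using $r_1 \leq \log n/\log d$ together with $\hat{d} = \omega(\log^4 n)$. The only differences are organizational — you split on the size of $\hat{d}$ and choose $\epsilon = 6/\log_2(n/\hat{d})$ adaptively, whereas the paper splits on whether $r_1 = 1$ and takes $\epsilon = \estimerror$; your version is, if anything, slightly more careful in verifying that the hypothesis of \Cref{l:vertex_expansion} is satisfied when $\hat{d}$ is close to $n$.
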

\noindent
This completes the proof.
\end{proof}

\bigskip

It remains to prove Claims \ref{claim:inequalities:proof:gknp} and \ref{claim:inequalities:proof:gknp:two}.

\bigskip

\begin{proof}[Proof of \Cref{claim:inequalities:proof:gknp}]
Note first that as $\epsilon_j \leq \frac{1}{2}$ for all $1 \leq j \leq r$, we can estimate the product as
\begin{equation}\label{eq:Proof_Gknp_Claim:eq1}
\prod_{j=0}^{r}\left(1-\epsilon_j \right) = \prod_{j=1}^{r}\left(1-\epsilon_j \right)\geq \exp\biggl(-2 \sum_{j=1}^{r} \epsilon_j \biggr).  
\end{equation}

By reversing the order of summation we can write this sum as 
\begin{align}
\sum_{j=1}^{r} \epsilon_j &= \sum_{j=1}^{r} \frac{5}{\log n -\log \left(2a\dhat^{j} \right)} 
= 5\sum_{j=1}^{r} \frac{1}{\log n - \log \left(2a\dhat^{r-j+1} \right)}. \label{eq:Proof_Gknp_Claim:eq2}   
\end{align}
To bound the term $\log \left(2a\dhat^{r-j+1} \right)$ from above, we first note that since $a\dhat^r \leq \frac{n}{2\log n}$, we have
\begin{align*}
\log \left(2a\dhat^{r} \right) \leq \log \left(\frac{n}{\log n} \right) = \log n -\log \log n.
\end{align*}
Furthermore, since $\dhat = \omega\left(k \log^3 n\right) = \omega\left( \log^4 n\right)$, it follows that for all $j\in \N$,
\begin{align*}
\log \left(\dhat^{j} \right) \geq 4 j \log \log n,
\end{align*}
which, together with the previous equation, yields that 
\begin{align*}
\log \left(2a\dhat^{r_0-j+1} \right) \leq \log n - \log \log n - 4(j-1) \log \log n.
\end{align*}

Combining this bound with \eqref{eq:Proof_Gknp_Claim:eq2} we obtain:

\begin{align}\label{eq:Proof_Gknp_Claim:eq3}
\sum_{j=1}^{r_0} \epsilon_j \leq 5 \sum_{j=1}^{r_0} \frac{1}{(4j-3) \log \log n} \leq \frac{5}{\log \log n} \left(1+ \sum_{j=1}^{r_0} \frac{1}{ 4j} \right) \leq \frac{5}{4} \cdot \frac{5+\log r_0}{\log \log n} \leq \frac{5}{4},
\end{align} 
where the penultimate inequality comes from a standard bound on the harmonic sum and the last inequality uses $r_0 \leq \frac{\log n}{e^5}$. Plugging \eqref{eq:Proof_Gknp_Claim:eq3} into \eqref{eq:Proof_Gknp_Claim:eq1} we obtain
\[
\prod_{j=0}^{r_0}\left(1-\epsilon_j \right) \geq \exp\left(-\frac{5}{2}\right) \geq \frac{1}{2^4},
\]
which concludes the proof of \eqref{Proof:Prop_estimates:epsilon_inequality}.
\end{proof}

\begin{proof}[Proof of \Cref{claim:inequalities:proof:gknp:two}]
Clearly, it is sufficient to prove the claim for $r=r_1$, and we note that $r_1 \geq 1$, since $\dhat \leq n$. If $r_1$ = $1$, we get from Property \ref{Property:neighbourhood_estimates:vNhd}' 
\begin{align*}
\frac{1}{2^{37}} \dhat \leq |\vNhd[1]{v}| \leq 2^{37} \dhat,
\end{align*} which immediately implies \eqref{Proof:Prop_estimates:d_inequality}. 

Otherwise, note that for each vertex $v \in [n]$ and each $\epsilon \in \left(0,\frac{1}{2}\right]$ such that $\left(\frac{\dhat}{n}\right)^\epsilon \leq 2^{-6}$, we have by Lemma \ref{l:vertex_expansion} that
\[
(1-\epsilon) \left(1-\estimerror \right)\dhat \leq |\vNhd[1]{v}| \leq \left(1+\estimerror\right) \dhat.
\]
Therefore, recalling that $d = d\left(\Gknp\right) = \frac{1}{n} \sum_{v \in [n]} |\vNhd[1]{v}|$, we have 
\begin{equation}\label{e:dbound}
(1-\epsilon) \left(1-\estimerror\right)\dhat \leq d \leq \left(1+\estimerror \right) \dhat.
\end{equation}

As $r_0 \geq 2$, it follows that $\dhat \leq \sqrt{n}$ and so we can take $\epsilon = \estimerror$ and by \eqref{e:dbound} obtain the following:
\[
\frac{1}{2} \dhat^r \leq (1-\epsilon)^r \left(1-\estimerror \right)^r\dhat^r \leq d^r \leq \left(1+\estimerror \right)^r \dhat^r \leq 2 \dhat^r.
\]
\end{proof}

\section{Concluding discussion}\label{sec:Discussion}

In \Cref{thm:main:Meyniel} we prove Conjecture \ref{conj:MeynielHyper} for $\Gknp$ in dense regimes up to a $\log$-factor. It would be interesting to remove this $\log$-factor to match the conjectured bound. We note that in the case of $G(n,p)$, Pra\l at and Wormald \cite{Pralat_Wormald_CopsMeyniels} gave a $2$-stage cop strategy with similarities to the vertex surrounding strategy of \L uczak and Pra\l at \cite{Luczak_Pralat_RobberZigZag} to show that Meyniel's Conjecture holds whp in $\Gnp$ in the dense regime. Similar ideas might be of use in order to remove this $\log$-factor.

Furthermore, our theorems give upper bounds on the cop number. It would be interesting to know if these upper bounds are close to tight. \L uczak and Pra\l at \cite{Luczak_Pralat_RobberZigZag} gave an escape strategy for the Robber which matches their upper bound up to logarithmic factors, which also uses in a critical way the regular (vertex-)expansion properties of $\Gnp$. However, whilst there are already some technical issues to overcome with extending their analysis of such a strategy to the case of $\Gknp$ with $k = \omega(1)$, the robber must also have to take into account in some way the `edge-expansion' of $\Gknp$, as we know there are regimes of $p$ where the cops can do strictly better than the bounds given by the vertex surrounding strategy. Hence some new ideas will be necessary to find a corresponding robber strategy in the hypergraph game.

More generally, it seems the game of Cops and Robber on hypergraphs has been much less well studied than the graphical counterpart. In particular, it would be interesting to find some natural classes of hypergraphs on which the cop number is bounded. Motivated by the classic result of Aigner and Fromme \cite{AiFr1984} on the cop number of planar graphs, it is natural to ask such questions in relation to geometric notions of embeddability.

However, even in the case $k=3$, it is clear that there are $k$-graphs which are embeddable without crossings in $\mathbb{R}^k$ but have arbitrary large cop number. Hence, in order to bound the cop number we need to make further assumptions on the structure of the $k$-graphs, and a natural one in the case of $3$-graphs would be to ask that the $3$-graph, when viewed as a $2$-dimensional simplicial complex, is simply connected.

\begin{question}
Is there a constant $K$ such that every simply connected $3$-graph $G$ which can be embedded without crossings in $\mathbb{R}^3$ satisfies $c(G) \leq K$?
\end{question}

For simply connected $3$-graphs, it is known that embeddability in $\mathbb{R}^3$, as with Kuratowski's theorem, has a characterisation in terms of excluded minors \cite{C17,C172}, here in terms of \emph{space minors}. In the case of graphs, a result of Andreae \cite{A86} shows that excluding a fixed minor bounds the cop number of a graph, and it would be interesting to know if, for an appropriate notion of minor, this also holds for the hypergraph game.

\begin{question}
Let $H$ be a fixed $k$-graph. Does there exist a constant $K:=K(H)$ such that every $k$-graph $G$ with no $H$-`minor' satisfies $c(G) \leq K$?
\end{question}

Finally, we note that there are perhaps other natural ways to extend the game of Cops and Robber to the hypergraph setting.  One particularly natural variant would be to play the game on the \emph{edges} of the hypergraph, rather than the vertices. That is, the cops and robber live on the edges of the hypergraph and are allowed to move to incident edges, instead of adjacent vertices. Let us denote the minimum number of cops needed to win in the \emph{edge-game} on $G$ by $c_e(G)$. In the case of graphs, the edge-game was considered by Dudek, Gordinowicz and Pra\l at \cite{DuGoPr2014}, who showed that it is closely related to the vertex-game, in that for any graph $G$,
\begin{align}
\left\lceil \frac{c(G)}{2} \right\rceil \leq c_e(G) \leq c(G)+1. \label{e:edge}
\end{align}
 In particular, $c(G)$ and $c_e(G)$ cannot differ by more than a fixed multiplicative constant. In the hypergraph game, it is less clear whether $c(G)$ and $c_e(G)$ can have wildly different behaviour.
\begin{question}
Is there a function $f:\mathbb{N} \rightarrow \mathbb{R}$ such that for any $k$-graph $G$,
\[
\frac{1}{f(k)} c(G) \leq c_e(G) \leq f(k) c(G)?
\]
\end{question}

\section*{Acknowledgements}
This work is supported in part by the Austrian Science Fund (FWF) [10.55776/\{W1230, P36131, I6502\}]. The fourth author is supported by the NSERC Discovery Grant R611450 (Canada). This work is the result of the fourth author's visit of TU Graz under Oberwolfach's Simons Visiting Professors program in December 2021. For the purpose of open access, the authors have applied a CC BY public copyright licence to any Author Accepted Manuscript version arising from this submission. 
\bibliography{references}
\bibliographystyle{abbrv}

\end{document}